\def\matlab{MATLAB}
\def\R{\mathbb{R}}
\def\C{\mathbb{C}}
\def\H{\mathbb{H}}
\def\bigO{\mathcal{O}}
\def\imagunit{\mathrm{i}} 
\def\transsym{\mathsf{T}}
\newcommand{\tp}[1][]{^{{#1}\transsym}}
\DeclarePairedDelimiter{\ceil}{\lceil}{\rceil}
\def\e{\mathrm{e}}
\def\eit{\eix{\theta}}
\def\emit{\emix{\theta}}
\newcommand{\eix}[1]{\e^{\imagunit #1}}
\newcommand{\emix}[1]{\e^{-\imagunit #1}}
\def\eps{\varepsilon}
\def\smin{\sigma_{\min}}
\def\Hcal{\mathcal{H}}
\def\Hinf{\Hcal_\infty}
\def\Ctwo{\Ccal^2}
\renewcommand{\Re}{\mathrm{Re}\,}
\renewcommand{\Im}{\mathrm{Im}\,}
\def\beq{\begin{equation}}
\def\eeq{\end{equation}}
\def\beqs{\begin{equation*}}
\def\eeqs{\end{equation*}}
\def\bseq{\begin{subequations}}
\def\eseq{\end{subequations}}
\def\dom{\mathcal{D}}
\def\jset{\mathcal{I}}
\def\gmax{f_\mathrm{max}}
\def\l{\lambda}
\def\lmax{\l_\mathrm{max}}
\def\lmin{\l_\mathrm{min}}
\def\smax{\sigma_\mathrm{max}}
\def\smin{\sigma_\mathrm{min}}
\def\srad{\rho}
\def\sradin{\rho_\mathrm{in}}
\def\opt{x}
\def\Ck{\mathcal{C}^q}
\def\Czero{\mathcal{C}^0}
\def\Cone{\mathcal{C}^1}
\def\Ctwo{\mathcal{C}^2}
\def\Cthree{\mathcal{C}^3}
\def\Cfour{\mathcal{C}^4}
\def\Cq{\mathcal{C}^{q}}
\def\Cinf{\mathcal{C}^\infty}
\DeclareMathOperator{\diag}{diag}
\def\H{\mathbb{H}}
\numberwithin{equation}{section}
\title{On Properties of Univariate Max Functions\\ at Local Maximizers}
\author{Tim Mitchell\thanks{
Max Planck Institute for Dynamics of Complex Technical Systems, Magdeburg, 39106 Germany, \href{mailto:mitchell@mpi-magdeburg.mpg.de}{\texttt{mitchell@mpi-magdeburg.mpg.de}}, ORCID: 0000-0002-8426-0242.}
\and
Michael L. Overton\thanks{Courant Institute of Mathematical Sciences, New York University, New York, NY 10012, USA. \href{mailto:mo1@nyu.edu}{\texttt{mo1@nyu.edu}}, ORCID: 0000-0002-6563-6371. 
Supported in part by U.S. National Science Foundation Grant DMS-2012250.}
}
\date{{\small\today}}
\theoremstyle{plain}
\newtheorem{theorem}{Theorem}[section]
\newtheorem{lemma}[theorem]{Lemma}
\newtheorem{corollary}[theorem]{Corollary}
\newtheorem{remark}[theorem]{Remark}
\def\plotex{0.445}
\def\scalederiv{0.34}
\begin{document}
\maketitle

\begin{abstract}
More than three decades ago, Boyd and Balakrishnan established a regularity result for the two-norm of a
transfer function at maximizers.  Their result extends easily to the statement that
the maximum eigenvalue of a univariate real analytic Hermitian matrix family is twice continuously differentiable, 
with Lipschitz second derivative, at all local maximizers, a property that is useful in several applications that we describe.
We also investigate whether this smoothness property extends to max functions more generally. We show that 
the pointwise maximum of a finite set of $q$-times continuously differentiable univariate functions must have zero derivative at a maximizer for $q=1$, but arbitrarily close to the maximizer, the derivative may not be defined,
even when $q=3$ and the maximizer is isolated.
\end{abstract}

\noindent
\textbf{Keywords:} univariate max functions $\cdot$ eigenvalues of Hermitian matrix families $\cdot$
	H-infinity norm $\cdot$ numerical radius $\cdot$ optimization of passive systems \\
\\
\textbf{MSC (2020):} 49J52 $\cdot$ 65F99


\section{Introduction}

Let $\H^{n}$ denote the space of $n\times n$ complex Hermitian matrices, 
let $\dom \subseteq \R$ be open, and
let \mbox{$H : \dom \to \H^{n}$} denote an analytic Hermitian matrix family in one real variable, i.e., for all~$\opt\in\dom$ and all $i,j\in\{1,\ldots,n\}$, there exist coefficients
$a_{0},a_{1},a_{2},\ldots$ such that the power series
$\sum_{k=0}^{\infty}a_{k}(t-\opt)^{k}$ converges to $H_{ij}(t)=\overline{H_{ji}(t)}$ for all $t$
in a neighborhood of~$\opt$.
For a generic family~$H$, the eigenvalues of $H(t)$ are simple for all $t \in \dom$; 
often known as the von Neumann-Wigner crossing-avoidance
rule~\cite{vonW93}, this phenomenon is emphasized in \cite[section~9.5]{Lax07},
where it is also illustrated on the front cover. 
The reason is simple: the real codimension of the subspace of Hermitian matrices with an eigenvalue of multiplicity $m$ is $m^{2}-1$, so to
obtain a double eigenvalue one would need three parameters generically; when the matrix family is real symmetric, the analogous codimension
is $\tfrac{m(m+1)}{2}-1$, so one would need two parameters generically.
When there are no multiple eigenvalues, the ordered eigenvalues of $H(t)$, say, $\mu_{j}(t)$ for $j=1,\ldots,n$, are all real
analytic functions. 

Let $\lmax:\H^{n}\to\R$ and $\lmin:\H^{n}\to\R$ denote
algebraically largest and smallest eigenvalue, respectively.
In the absence of multiple eigenvalues, $\lmax \circ H$ and $\lmin \circ H$ are both smooth functions of $t$.
However, for the nongeneric family $H(t)=\diag(t,-t)$, a double eigenvalue occurs at~$t=0$. 
By a theorem of Rellich, given in Section~\ref{sec:eigs},
the eigenvalues can be written as two real analytic functions, $\mu_{1}(t)=t$ and $\mu_{2}(t)=-t$, but we must give up the property that
these functions are ordered near zero. Consequently, the function $\lmax\circ H$ is not differentiable at its minimizer $t=0$.

In contrast, the function $\lmax\circ H$ is \emph{unconditionally} $\Ctwo$, i.e., twice continuously differentiable, with Lipschitz
second derivative, near all its local \emph{maximizers}, regardless of eigenvalue multiplicity at these maximizers. As we explain below, 
this observation is a straightforward extension of a well-known result of Boyd and Balakrishnan \cite{BoyB90} established more than 
three decades ago. One purpose of this paper is to bring attention to the more general result, as it is useful in a number of applications.
We also investigate whether this smoothness property extends to max functions more generally.  
We show that 
the pointwise maximum of a finite set of continuously differentiable univariate functions must have zero derivative at a maximizer.  However, arbitrarily close to the maximizer, the derivative may not be defined,
even if the functions are three times continuously differentiable and the maximizer is isolated.

\section{Properties of max functions at local maximizers}
\label{sec:maxfuns}
Let $\dom \subset \R$ be open, $\jset = \{1,\ldots,n\}$, and $f_j : \dom \to \R$ be continuous for all~$j \in \jset$,
and define
\begin{equation}
	\gmax(t) \coloneqq \max_{j \in \jset} f_j(t).
\end{equation}

\begin{lemma}
\label{lem:extrema}
Let $\opt \in \dom$ be any local maximizer of $\gmax$ with $\gmax(\opt)=\gamma$
and let $\jset_\gamma = \{ j \in \jset : f_j(\opt) = \gamma\}$.  Then
\begin{enumerate}[label=(\roman*),font=\normalfont,leftmargin=0.75cm]
\item for all $j \in \jset_\gamma$, $\opt$ is a local maximizer of $f_j$ and 
\item for all $j \in \jset \setminus \jset_\gamma$, $f_j(\opt) < \gamma$.
\end{enumerate}
\end{lemma}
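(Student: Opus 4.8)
The plan is to derive both parts directly from the definition of $\gmax$ as a pointwise maximum, together with the defining inequality of a local maximizer; somewhat surprisingly, continuity of the $f_j$ plays no role here. The starting observation is the elementary bound $f_j(t) \le \gmax(t)$ for every $j \in \jset$ and every $t \in \dom$, which holds simply because $\gmax(t)$ is the maximum over $j$ of the values $f_j(t)$.

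Part (ii) I would dispatch first, as it is immediate. Evaluating the bound at $t = \opt$ gives $f_j(\opt) \le \gmax(\opt) = \gamma$ for every $j$. For $j \in \jset \setminus \jset_\gamma$, the definition of $\jset_\gamma$ forces $f_j(\opt) \ne \gamma$, so the inequality must in fact be strict, i.e., $f_j(\opt) < \gamma$, which is exactly the assertion.

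For part (i), the key step is to invoke the local maximizer hypothesis on $\gmax$ to obtain a single neighborhood $U \subseteq \dom$ of $\opt$ on which $\gmax(t) \le \gmax(\opt) = \gamma$. Then, for any $j \in \jset_\gamma$ and any $t \in U$, chaining the pointwise bound with this local inequality yields $f_j(t) \le \gmax(t) \le \gamma = f_j(\opt)$, where the final equality is precisely the defining property of membership in $\jset_\gamma$. Hence $f_j$ attains a local maximum at $\opt$, proving the claim.

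I do not anticipate a genuine obstacle: the only point requiring a little care is that the neighborhood $U$ produced by the local maximizer property of $\gmax$ serves uniformly for every active index $j \in \jset_\gamma$ at once, so there is no need to intersect index-dependent neighborhoods. It is worth remarking in passing that the argument uses neither differentiability nor even continuity of the $f_j$, which clarifies that Lemma~\ref{lem:extrema} is purely order-theoretic and merely sets the stage for the smoothness analysis of the active functions carried out subsequently.
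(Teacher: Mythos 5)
Your proof is correct and is precisely the elementary argument the paper alludes to when it omits the proof of Lemma~\ref{lem:extrema}: part (ii) follows from $f_j(\opt) \le \gmax(\opt)$ plus the definition of $\jset_\gamma$, and part (i) from chaining $f_j(t) \le \gmax(t) \le \gamma = f_j(\opt)$ on the neighborhood furnished by the local maximizer hypothesis. Your side remark that continuity of the $f_j$ is never used is also accurate.
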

\noindent
We omit the proof as it is elementary.  

We now consider adding additional assumptions on the smoothness of the~$f_j$,
writing $f_{j}\in\Cq$ to mean $f_{j}$ is $q$-times continuously differentiable.
Clearly, assuming that the $f_j$ are $\Czero$ is not sufficient to obtain differentiability at maximizers 
(e.g., $\gmax(t) = f_1(t) = -|t|$),
but $\Cone$ is sufficient.

\begin{theorem}
\label{thm:diff_at_max}
Let $\opt \in \dom$ be any local maximizer of $\gmax$  with $\gmax(\opt)=\gamma$.
Suppose that for all $j \in \jset$, 
$f_j$ is~$\Cone$ at~$\opt$.
Then $\gmax$ is differentiable at $\opt$ with $\gmax^\prime(\opt) = 0$.
\end{theorem}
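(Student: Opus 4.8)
The plan is to exploit a squeeze obtained by combining the maximality of $\gmax$ at $\opt$ with a first-order expansion of a single active function. First I would invoke Lemma~\ref{lem:extrema}: the set $\jset_\gamma$ is nonempty, since some $f_j$ attains the value $\gamma = \gmax(\opt)$, and for every $j \in \jset_\gamma$ the point $\opt$ is a local maximizer of $f_j$. Fixing any such index $j_0 \in \jset_\gamma$, and noting that $\opt$ is an interior point because $\dom$ is open, Fermat's interior-extremum theorem applied to the differentiable function $f_{j_0}$ yields $f_{j_0}'(\opt) = 0$.

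Next I would establish a two-sided bound on $\gmax(\opt+h) - \gamma$ for all sufficiently small $h$. The upper bound is immediate from the hypothesis that $\opt$ is a local maximizer of $\gmax$, giving $\gmax(\opt+h) \le \gamma$. The lower bound follows from the trivial inequality $\gmax(\opt+h) \ge f_{j_0}(\opt+h)$ together with the expansion of $f_{j_0}$ at $\opt$: since $f_{j_0}$ is $\Cone$ at $\opt$ with $f_{j_0}'(\opt) = 0$, we have $f_{j_0}(\opt+h) = \gamma + o(h)$. Combining the two bounds gives $\gamma + o(h) \le \gmax(\opt+h) \le \gamma$, and hence $\gmax(\opt+h) - \gamma = o(h)$.

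To conclude, I would divide by $h$ and let $h \to 0$: the estimate $|\gmax(\opt+h) - \gamma| = o(h)$ forces the difference quotient $(\gmax(\opt+h) - \gamma)/h$ to tend to $0$, so $\gmax$ is differentiable at $\opt$ with $\gmax'(\opt) = 0$.

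I expect the argument to be short once this viewpoint is adopted, and the main point requiring care is recognizing that the entire upper bound is furnished for free by the maximality of $\gmax$, so that only one active function $f_{j_0}$ is needed to supply the lower bound. This sidesteps the more laborious route of first proving the (correct but unnecessary) local identity $\gmax(\opt+h) = \max_{j \in \jset_\gamma} f_j(\opt+h)$, which would require separately using continuity to bound each inactive function away from $\gamma$ near $\opt$.
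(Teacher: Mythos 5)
Your proof is correct, and it takes a genuinely different route from the paper's. The paper argues by contradiction: if $\gmax^\prime(\opt)$ fails to exist or is nonzero, there is a sequence $\eps_k \to 0$ whose difference quotients for $\gmax$ do not tend to zero; since $\jset$ is finite, a pigeonhole argument yields a single index $j$ and a subsequence along which $\gmax(\opt+\eps_{k_\ell}) = f_j(\opt+\eps_{k_\ell})$, so the offending difference quotients are inherited by $f_j$, contradicting $f_j^\prime(\opt)=0$, which follows (exactly as in your argument) from Lemma~\ref{lem:extrema} together with Fermat's theorem. Your squeeze replaces the subsequence extraction by two one-line bounds: $\gmax(\opt+h) \le \gamma$ by local maximality of $\gmax$, and $\gmax(\opt+h) \ge f_{j_0}(\opt+h) = \gamma + o(h)$ for a single active index $j_0$. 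This buys several things. First, it requires differentiability at $\opt$ of only one active function, so it actually proves the theorem under a weaker hypothesis than stated. Second, finiteness of $\jset$ enters only to ensure the maximum defining $\gamma$ is attained (and that $\gmax$ is continuous), so the argument extends essentially verbatim to infinite families whose supremum is attained at $\opt$. Third, it is more robust on a technical point: the paper's step ``$\gmax = f_j$ along a subsequence implies $f_j^\prime$ does not exist or is nonzero at $\opt$'' implicitly requires that the original sequence be chosen so that its difference quotients stay bounded away from zero, since a subsequence limit can be zero even when the full limit fails to exist; your direct two-sided estimate never confronts this issue. The only thing the paper's mechanism offers in exchange is the device of identifying which $f_j$ realizes $\gmax$ near the maximizer, which is thematically closer to the later structural results (Theorems~\ref{thm:tcd} and~\ref{thm:c2_lipschitz}), but it is not needed for this statement.
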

\begin{proof}
Since the functions $f_j$ are continuous, clearly $\gmax$ is also continuous,
and without loss of generality, we can assume that $\gamma=0$.
Suppose that~$\gmax^\prime$ does not exist at~$\opt$ or does not equal zero, i.e.,
there exists some sequence~$\{\eps_k\}$ with $\eps_k \to 0$ such that $\lim_{k \to \infty} \frac{\gmax(\opt + \eps_k)}{\eps_k}$
does not exist or is not zero.
Since $\jset$ is finite, there exist a $j \in \jset$ and a subsequence $\{\eps_{k_\ell}\}$ such that 
\mbox{$\gmax(\opt + \eps_{k_\ell}) = f_j(\opt + \eps_{k_\ell})$} for all $k_\ell$, which implies that $f_j^\prime$ 
either does not exist or is not zero at $\opt$. 
However, as $f_j$ is $\Cone$ and with local maximizer $\opt$
by Lemma~\ref{lem:extrema},
it must be that $f_j^\prime(\opt) = 0$; hence, we have a contradiction.
\end{proof}

Assuming that the $f_j$ are $\Cone$ at (or near) a maximizer 
is not sufficient to obtain that $\gmax$ is twice differentiable at this point. 
For example, if 
\[
	f_1(t) = 
	\begin{cases}
	-t^2 & \text{if $ t \leq 0$} \\
	-3t^2 & \text{if $t > 0$}
	\end{cases}
	\qquad \text{and} \qquad 
	f_2(t) = -2t^2,
\]
then the second derivative of $\gmax=\max(f_1,f_2)$ does not exist at the maximizer $t=0$, as
\mbox{$\gmax^\prime(t) = -2t$} on the left and $-4t$ on the right, so 
$\lim_{t \to 0} \tfrac{\gmax^\prime(t)}{t}$ does not exist at $t=0$.
In this example, $\gmax$ is continuously differentiable at $t=0$,
but this does not hold in general,
even when assuming that the $f_j$ are $\Cthree$ near a maximizer; 
see Remark~\ref{rem:notC1} below. However, we 
do have the following result.

\begin{theorem}
\label{thm:tcd}
Let $\opt \in \dom$ be any local maximizer of $\gmax$  with $\gmax(\opt)=\gamma$.
Suppose that for all $j \in \jset$, 
$f_j$ is~$\Cthree$ near~$\opt$.
Then for all sufficiently small $|\eps|$,
\begin{equation}
	\label{eq:approx_quad}
	\gmax(x+\eps) = \gamma + M \eps^2 + O(|\eps|^3),
\end{equation}
where $M =  \tfrac{1}{2}\left(\max_{j \in \jset_\gamma} f_j^{\prime\prime}(\opt) \right) \leq 0$.
If the $\Cthree$ assumption is reduced to $\Ctwo$, then $\gmax(x+\eps) = \gamma + \bigO(\eps^2)$. 
\end{theorem}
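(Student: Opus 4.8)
The plan is to reduce the problem to the finite set $\jset_\gamma$ of functions that are active at the maximizer and then to control the pointwise maximum of their second-order Taylor approximations. First I would translate so that $\opt=0$ and $\gamma=0$. For each inactive index $j \in \jset \setminus \jset_\gamma$, Lemma~\ref{lem:extrema}(ii) gives $f_j(\opt) < \gamma$, so by continuity there are a neighborhood of $\opt$ and a $\delta > 0$ on which $f_j < \gamma - \delta$; taking the minimum over the finitely many inactive indices, all inactive functions stay below $\gamma - \delta$ for $|\eps|$ small. Meanwhile every active $f_j$ equals $\gamma$ at $\opt$ and is continuous, so for all sufficiently small $|\eps|$ the maximum over $\jset_\gamma$ already exceeds $\gamma - \delta$, and hence the inactive functions never attain the overall maximum. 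This gives $\gmax(\opt + \eps) = \max_{j \in \jset_\gamma} f_j(\opt + \eps)$ for all small $|\eps|$.

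Next I would expand each active function. Fix $j \in \jset_\gamma$. By Lemma~\ref{lem:extrema}(i), $\opt$ is a local maximizer of $f_j$, so the first- and second-order necessary conditions for a one-variable local maximum yield $f_j^\prime(\opt) = 0$ and $f_j^{\prime\prime}(\opt) \leq 0$. Since $f_j \in \Cthree$ near $\opt$, Taylor's theorem gives
\[
	f_j(\opt + \eps) = \gamma + \tfrac{1}{2} f_j^{\prime\prime}(\opt)\,\eps^2 + O(|\eps|^3).
\]
The crucial point, and the step I expect to require the most care, is that because $\jset_\gamma$ is \emph{finite} I can choose a single neighborhood and a single constant $C$ so that the remainder bound $|f_j(\opt + \eps) - \gamma - \tfrac{1}{2} f_j^{\prime\prime}(\opt)\,\eps^2| \leq C|\eps|^3$ holds \emph{simultaneously} for all $j \in \jset_\gamma$. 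This uniformity is exactly what legitimizes interchanging the maximum with the asymptotic expansion; without finiteness the argument would break down.

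Finally, writing $m_j = \tfrac{1}{2} f_j^{\prime\prime}(\opt) \leq 0$ and $M = \max_{j \in \jset_\gamma} m_j$, the uniform bound sandwiches the maximum. From $f_j(\opt + \eps) \leq \gamma + m_j \eps^2 + C|\eps|^3 \leq \gamma + M\eps^2 + C|\eps|^3$ (using $m_j \leq M$ and $\eps^2 \geq 0$) I obtain the upper bound, while evaluating at any index $j^\star \in \argmax_{j \in \jset_\gamma} m_j$ gives $\gmax(\opt + \eps) \geq f_{j^\star}(\opt + \eps) \geq \gamma + M\eps^2 - C|\eps|^3$. Together these yield $\gmax(\opt + \eps) = \gamma + M\eps^2 + O(|\eps|^3)$, and since each $m_j \leq 0$ forces $M \leq 0$, the claim follows. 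For the $\Ctwo$ reduction I would replace the third-order estimate with the observation that a continuous $f_j^{\prime\prime}$ is bounded near $\opt$, so that $f_j^\prime(\opt)=0$ gives $|f_j(\opt + \eps) - \gamma| \leq K\eps^2$ for a constant $K$ uniform over the finite active set, whence $\gmax(\opt + \eps) = \gamma + \bigO(\eps^2)$.
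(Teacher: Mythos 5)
Your proof is correct and takes essentially the same route as the paper's: localize to the active set $\jset_\gamma$ via Lemma~\ref{lem:extrema}, Taylor-expand each active $f_j$ to third order using $f_j^\prime(\opt)=0$ and $f_j^{\prime\prime}(\opt)\leq 0$, and take the maximum. The only differences are cosmetic: you justify the localization by continuity alone (the paper invokes Lipschitzness, which is more than is needed), and you spell out the uniform remainder constant and the two-sided sandwich argument that the paper compresses into ``taking the maximum of the equation above.''
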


\begin{proof}
Let $\gamma = \gmax(\opt)$ and let $\jset_\gamma = \{ j \in \jset : f_j(\opt) = \gamma \}$.
By Lemma~\ref{lem:extrema}, we have that $\opt$ is also a local maximizer of $f_j$ for all $j \in \jset_\gamma$
and $f_j(\opt) < \gamma$ for all $j \in \jset \setminus \jset_\gamma$.
Since the $f_j$ are Lipschitz near $\opt$,
\[
	\gmax(\opt+\eps) = \max_{j \in \jset_\gamma} f_j(\opt+\eps)
\]
holds for all sufficiently small $|\eps|$.
For each $j \in \jset_\gamma$, by Taylor's Theorem
we have that
\begin{align*}
	f_j(\opt+\eps) 
	&= f_j(\opt) + f_j^\prime(\opt)\eps + \tfrac{1}{2} f_j^{\prime\prime}(\opt) \eps^2 
			+ \tfrac{1}{6}  f_j^{\prime\prime\prime}(\tau_j) \eps^3 \\
	&= \gamma + \tfrac{1}{2} f_j^{\prime\prime}(\opt) \eps^2 + O(|\eps|^3)
\end{align*}
for $\tau_j$ between $x$ and $x + \eps$.
Taking the maximum of the equation above over all $j \in \jset_\gamma$ yields~\eqref{eq:approx_quad}.
The proof for the $\Ctwo$ case follows analogously.
\end{proof}

\begin{remark}
\label{rem:notC1}
Even with the $\Cthree$ assumption, 
$\gmax$ is not necessarily continuously differentiable at maximizers,
let alone twice differentiable.
A simple counterexample is given by 
$f_1(t) = t^8 (\sin(\tfrac{1}{t}) - 1)$ and $f_2(t) = t^8 (\sin(\tfrac{1}{2t}) - 1)$, with 
$f_1(0) = f_2(0) = 0$, where $f_1$ and $f_2$ are $\Cthree$ but not $\Cfour$ at $t=0$.
However, in this case the maximizer~$t=0$ of $\gmax$ is not an isolated maximizer.
In contrast, in Section~\ref{sec:counter}, we construct a counterexample where the $f_j$ are $\Cthree$ functions,
and for which $\gmax$ has an isolated maximizer, yet the derivative of $\gmax$ does not exist at points arbitrarily close to this maximizer.
It seems that this counterexample can be extended to apply to $\Ck$ functions for any $q\geq1$.
The key point of both of these counterexamples is not that the $f_j$ are insufficiently smooth \emph{per se}, but that the $f_j$ cross each other infinitely many times near maximizers.
\end{remark}

In light of Remark~\ref{rem:notC1}, we now make a much stronger assumption.

\begin{theorem}
\label{thm:c2_lipschitz}
Given a maximizer $\opt$ of $\gmax$, suppose there exist $j_1,j_2 \in \jset$, possibly equal,
such that, for all sufficiently small $\eps > 0$, $\gmax(\opt - \eps) = f_{j_1}(\opt - \eps)$ 
and $\gmax(\opt + \eps) = f_{j_2}(\opt + \eps)$,
with $f_{j_1}$ and $f_{j_2}$ both $\Cthree$ near $\opt$.
Then $\gmax$ is twice continuously differentiable,
with Lipschitz second derivative, near~$\opt$.
\end{theorem}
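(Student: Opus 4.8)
The plan is to reduce $\gmax$ near $\opt$ to an explicit function glued at the single junction $\opt$ from $f_{j_1}$ on the left and $f_{j_2}$ on the right, and then to verify smoothness by matching one-sided derivatives up to order two at $\opt$. First I would note that $j_1,j_2\in\jset_\gamma$: by continuity of $f_{j_1}$ and $\gmax$, letting $\eps\to0^+$ in $\gmax(\opt-\eps)=f_{j_1}(\opt-\eps)$ gives $f_{j_1}(\opt)=\gamma$, and likewise $f_{j_2}(\opt)=\gamma$. By Lemma~\ref{lem:extrema}, $\opt$ is then a local maximizer of both $f_{j_1}$ and $f_{j_2}$, so, these being $\Cone$, $f_{j_1}'(\opt)=f_{j_2}'(\opt)=0$. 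Fixing $\delta>0$ small enough that the hypotheses hold and $f_{j_1},f_{j_2}$ are $\Cthree$ on $(\opt-\delta,\opt+\delta)$, we obtain
\[
	\gmax(t) =
	\begin{cases}
	f_{j_1}(t) & \opt-\delta < t \le \opt, \\
	f_{j_2}(t) & \opt \le t < \opt+\delta,
	\end{cases}
\]
the two formulas agreeing at $\opt$ since both equal $\gamma$. On each open half-interval $\gmax$ inherits the $\Cthree$ smoothness of $f_{j_1}$ or $f_{j_2}$, so all that remains is the junction. The $\Cone$ property there is immediate: the left and right first derivatives of $\gmax$ at $\opt$ both vanish, and $\lim_{t\to\opt^-}\gmax'(t)=f_{j_1}'(\opt)=0=f_{j_2}'(\opt)=\lim_{t\to\opt^+}\gmax'(t)$ by continuity.

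The crux, which I expect to be the main obstacle, is showing the one-sided \emph{second} derivatives also agree, i.e.\ $f_{j_1}''(\opt)=f_{j_2}''(\opt)$; without the max structure this can fail, as in the $\Cone$ example preceding the theorem. Here I would exploit the ordering forced by the maximum. Set $g \coloneqq f_{j_1}-f_{j_2}$, which is $\Cthree$ with $g(\opt)=g'(\opt)=0$. Since $\gmax=f_{j_1}\ge f_{j_2}$ for $t<\opt$ and $\gmax=f_{j_2}\ge f_{j_1}$ for $t>\opt$, we have $g\ge0$ on $(\opt-\delta,\opt)$ and $g\le0$ on $(\opt,\opt+\delta)$. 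Taylor's theorem gives $g(\opt+\eps)=\tfrac12 g''(\opt)\eps^2+O(|\eps|^3)$, so if $g''(\opt)\neq0$ then $g(\opt+\eps)$ would carry the constant sign of $g''(\opt)$ on both sides of $\opt$ for all small $|\eps|$, contradicting the sign change of $g$ across $\opt$. Hence $g''(\opt)=0$, that is $f_{j_1}''(\opt)=f_{j_2}''(\opt)$.

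With the one-sided second derivatives matching, $\gmax''(\opt)$ exists and equals their common value, and $\lim_{t\to\opt^-}\gmax''(t)=f_{j_1}''(\opt)=f_{j_2}''(\opt)=\lim_{t\to\opt^+}\gmax''(t)$ by continuity, so $\gmax''$ is continuous and $\gmax$ is $\Ctwo$ near $\opt$. Finally, as $f_{j_1},f_{j_2}$ are $\Cthree$, their second derivatives are $\Cone$ and hence Lipschitz on each closed half of a small interval about $\opt$; a standard splicing argument---bounding $|\gmax''(s)-\gmax''(t)|$ by passing through $\opt$ when $s<\opt<t$---then shows $\gmax''$ is Lipschitz near $\opt$, with constant the larger of the two one-sided Lipschitz constants. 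This completes the plan.
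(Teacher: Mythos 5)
Your proof is correct, and its overall structure (glue $f_{j_1}$ on the left to $f_{j_2}$ on the right, then match values and one-sided derivatives up to order two at $\opt$) mirrors the paper's. The difference lies in how the crucial identity $f_{j_1}''(\opt)=f_{j_2}''(\opt)$ is obtained. The paper gets it by invoking Theorem~\ref{thm:tcd}: since $\gmax$ coincides with $f_{j_1}$ on the left of $\opt$ and with $f_{j_2}$ on the right, matching each one-sided Taylor expansion against the expansion $\gmax(\opt+\eps)=\gamma+M\eps^2+O(|\eps|^3)$ forces both second derivatives to equal the same quantity $2M$. You instead give a self-contained ordering argument: $g\coloneqq f_{j_1}-f_{j_2}$ satisfies $g(\opt)=g'(\opt)=0$, is nonnegative to the left of $\opt$ and nonpositive to the right (because the max selects $f_{j_1}$ there and $f_{j_2}$ there), so $g''(\opt)\neq 0$ would force $g$ to have a constant sign on both sides, a contradiction; hence $g''(\opt)=0$. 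Your route has two small advantages: it sidesteps the fact that Theorem~\ref{thm:tcd} is stated under the blanket hypothesis that \emph{all} $f_j$, $j\in\jset$, are $\Cthree$ near $\opt$, whereas Theorem~\ref{thm:c2_lipschitz} only assumes this of $f_{j_1}$ and $f_{j_2}$ (the paper's invocation implicitly uses that $\gmax=\max(f_{j_1},f_{j_2})$ near $\opt$, so the theorem applies with the reduced index set); and you spell out the final gluing and the Lipschitz splicing through $\opt$, which the paper compresses into a single sentence. What the paper's route buys is brevity and the explicit identification of the common second derivative with the coefficient $M$ from the quadratic expansion, which is the quantity used later in the eigenvalue applications.
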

\begin{proof}
It is clear that $f_{j_1}(\opt)=f_{j_2}(\opt)=\gamma$ and $f_{j_1}^\prime(\opt)=f_{j_2}^\prime(\opt) = 0$.
By Theorem~\ref{thm:tcd}, both
$f_{j_1}^{\prime\prime}(\opt)$ and $f_{j_2}^{\prime\prime}(\opt)$ are equal to $M$, 
so $\gmax$ is locally described by two $\Cthree$ pieces whose 
function values and first and second derivatives agree at $\opt$. 
Hence, $\gmax$ is $\Ctwo$ with Lipschitz second derivative near~$\opt$.
\end{proof}

The assumptions of Theorem~\ref{thm:c2_lipschitz} hold if the $f_{j}$ are real analytic~\cite[Corollary~1.2.7]{KraP02}.
In particular, this holds if
the $f_{j}$ are eigenvalues of a univariate real analytic Hermitian matrix function, as we discuss in Section~\ref{sec:eigs}. First, we
present the $\Cthree$ counterexample mentioned above.

\section{An example with $\Cthree$ functions $f_j$ and an isolated maximizer for which $\gmax$ is not continuously differentiable at $\opt$}
\label{sec:counter}
Let $l_k = \tfrac{1}{2^k}$,
and 
$f_1 : [-1, 1] \to \R$ be defined by
\begin{equation}
	\label{eq:f1}
	f_1(t) = \begin{cases}
	p_k(t) & \text{if $t \in \left[l_{k+1}, l_k \right]$ for $k = 0,1,2,\ldots$} \\
	-t^2 	& \text{if $t \in [-1,0]$}  \\
	\end{cases}
\end{equation}
where  $p_k$ is a (piece of a) degree-nine polynomial chosen such that at
\begin{enumerate}
\item $l_{k+1}$ (the left endpoint), 
	$p_k$ and $-t^2$ agree up to and including their respective third derivatives,
\item $l_k$ (the right endpoint), 
	$p_k$ and $-t^2$ agree up to and including their respective third derivatives, 
\item $t_k = \tfrac{1}{2}(l_{k+1} + l_k)$ (the midpoint), 
	 $p_k$ and $-t^2$ agree, but the first derivative of $p_k$ is
	 $s_k \neq 1$ times the value of the first derivative of $-t^2$.
\end{enumerate}
\begin{figure}[t]
	\begin{subfigure}{.5\textwidth}
		\centering
		\includegraphics[scale=\plotex,trim=0mm 0mm 0mm 0mm,clip]{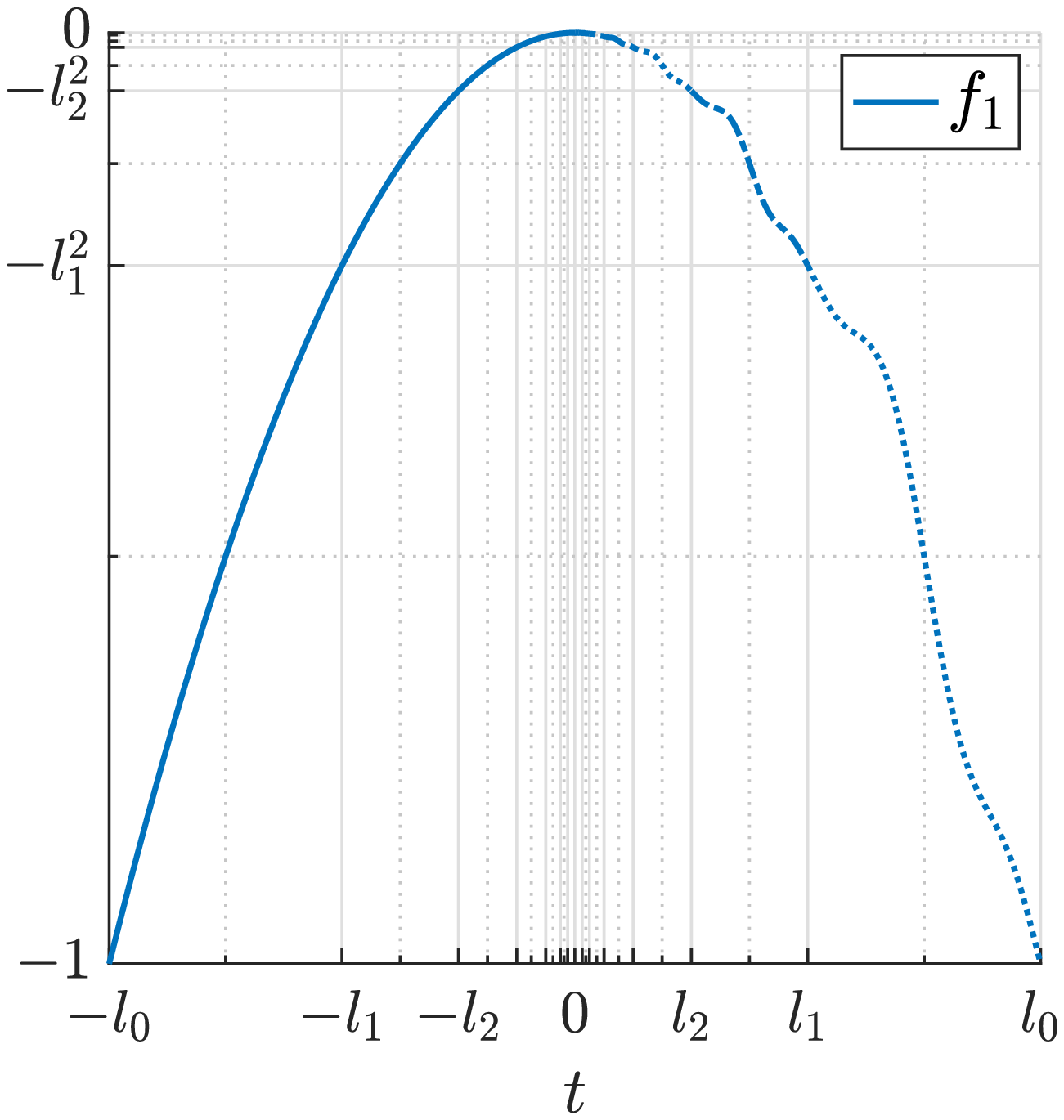}
		\caption{}
		\label{fig:f1}
	\end{subfigure}
	\begin{subfigure}{.5\textwidth}
		\centering
		\includegraphics[scale=\plotex,trim=0mm 0mm 0mm 0mm,clip]{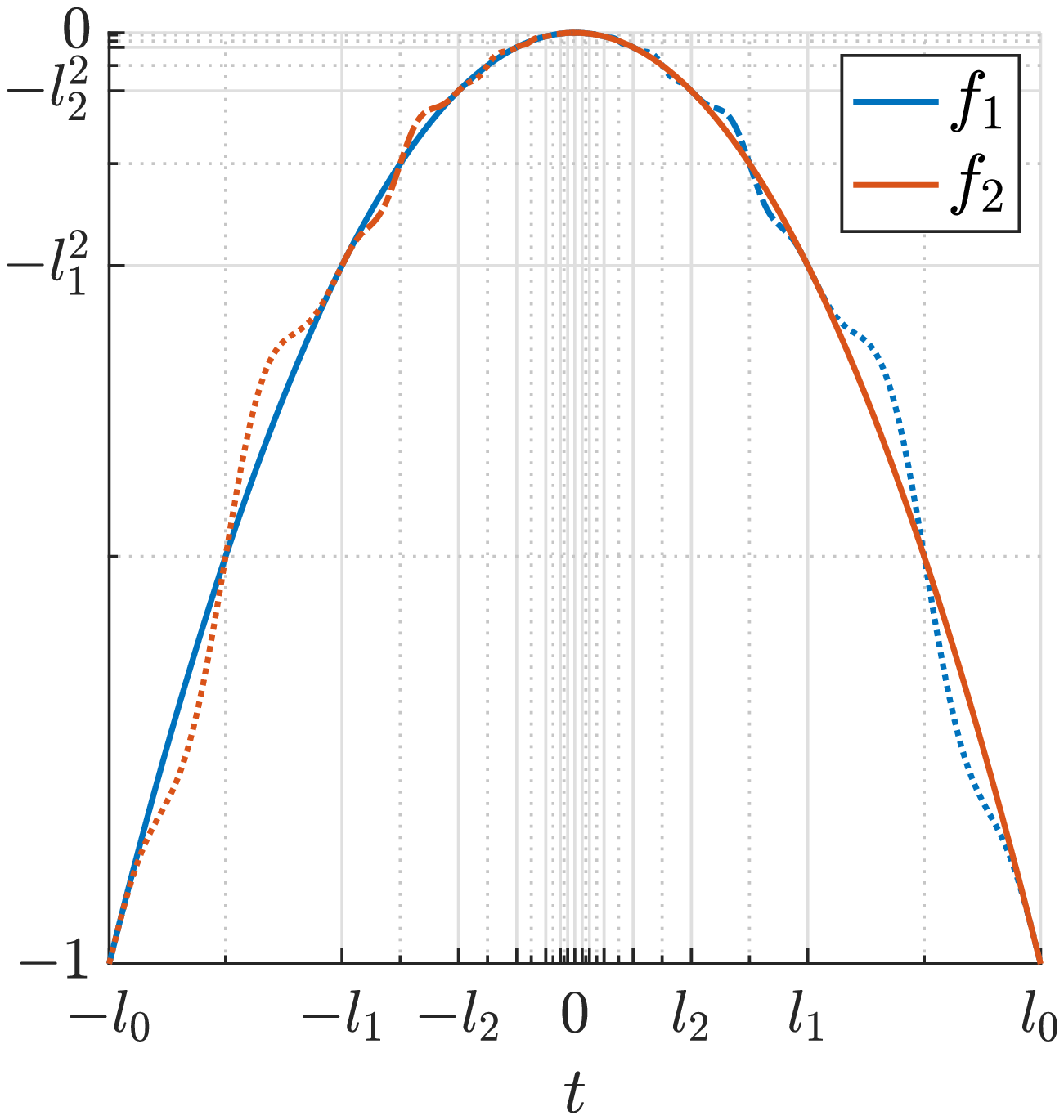}
		\caption{}
		\label{fig:counter}
	\end{subfigure}
\caption{Plots of $f_1$ and $f_2$ with $s_k = 2$; their $-t^2$ parts are shown in solid, while their $p_k$ 
	parts are shown in dotted for $k$ even and dash-dot for $k$ odd.}
\label{fig:f1_plots}
\end{figure}
For any $k$, the degree-nine polynomial $p_k$ is uniquely determined 
by the ten algebraic constraints given above.  If we choose $s_k = 1$,
then $p_k$ is simply $-t^2$.  However, by choosing $s_k > 1$ 
but sufficiently close to 1, then $p_k$ must be 
strictly decreasing between its endpoints $l_{k+1}$ and $l_k$ and cross
$-t^2$ at~$t_k$.
If this is done for all $k$, it follows that $t=0$ must be an isolated maximizer of~$f_1$.
See Figure~\ref{fig:f1} for a plot of~$f_1$ with~$s_k=2$ for all $k$; the choice $s_{k}=2$ is not close to 1 but was chosen
to make the features of $f_{1}$ easily seen.

Now define  $f_2(t) = f_1(-t)$, i.e., the graph of $f_{2}$ is a reflection of the graph of~$f_{1}$ across the vertical line $t=0$.
Figure~\ref{fig:counter} shows $f_1$ and $f_2$ plotted together, again with $s_{k}=2$, showing how they cross at every~$t_k$.
Recall that by our construction, their respective first three derivatives match at each~$l_k$, 
but their first derivatives do not match at any $t_k$. Figure~\ref{fig:derivs} shows plots of the 
first three derivatives of~$f_1$ for 
two different sequences $\{s_k\}$ respectively defined by $s_k = 1 + 2^{-k}$ and $s_k = 1 + 2^{-2k}$.
The rightmost plots in Figure~\ref{fig:derivs}
indicate that the first choice for sequence $\{s_k\}$ does not converge to 1 fast enough 
for $f_1^{\prime\prime\prime}$ to exist and be continuous at $t=0$,
but that the second sequence does.
In fact, for this latter choice of sequence, we have the following pair of theorems
respectively proving that $f_1$ is indeed $\Cthree$ with $t=0$ being an isolated maximizer.
We defer the proofs to Appendix~\ref{apdx:proofs} as they are a bit technical,
and  in Appendix~\ref{apdx:sk}, we discuss why $s_k = 1 + 2^{-k}$ does not converge to 1 sufficiently fast for $f_1^{\prime\prime\prime}(0)$ to exist.

\begin{theorem}
\label{thm:f1_c3}
For $f_1$ defined in~\eqref{eq:f1}, if $s_k = 1 + 2^{-2k}$,
then $f_1$ is $\Cthree$ on its domain $[-1,1]$.
\end{theorem}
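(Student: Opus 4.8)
The plan is to reduce the whole statement to the single accumulation point $t=0$. Away from $0$ there is nothing to do: on $[-1,0]$ and on each $[l_{k+1},l_k]$ the function $f_1$ is a polynomial, hence $\Cinf$, and at each interior node $l_k$ (for $k\geq 1$) the two adjacent pieces each agree with $-t^2$ up to and including the third derivative by constraints~1 and~2 of the construction, so they agree with one another to third order and $f_1$ is $\Cthree$ across $l_k$. Since $f_1=-t^2$ on $[-1,0]$, the left-hand derivatives at $0$ are $f_1'(0^-)=0$, $f_1''(0^-)=-2$, $f_1'''(0^-)=0$, and the entire theorem comes down to showing that the corresponding right-hand limits exist and match these three values.

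First I would pass to the deviation $r_k \coloneqq p_k + t^2$ on $[l_{k+1},l_k]$. Constraints~1 and~2 force $r_k$ to vanish to order four at each endpoint, and constraint~3 forces $r_k(t_k)=0$; this accounts for nine roots (with multiplicity) of the degree-nine polynomial $r_k$ and hence pins down the closed form
\[
    r_k(t) = c_k\,(t-l_{k+1})^4 (t-l_k)^4 (t-t_k).
\]
The one remaining constraint, that $p_k'(t_k)$ equals $s_k$ times the slope of $-t^2$ at $t_k$, fixes $c_k$. Because $t_k-l_{k+1}$, $t_k-l_k$, and $t_k$ are all comparable to $2^{-k}$, differentiating the closed form at $t_k$ and solving yields $|c_k|$ proportional to $(s_k-1)\,2^{7k}$, which with $s_k-1=2^{-2k}$ becomes $|c_k| = O(2^{5k})$.

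The crux is a derivative estimate on the shrinking interval. Writing $h_k = l_k - l_{k+1} = 2^{-(k+1)}$, every factor appearing in $r_k$ has modulus $O(h_k)$ on $[l_{k+1},l_k]$, so the Leibniz rule (each differentiation lowers the total degree by one and produces a bounded number of terms) gives, for $m=0,1,2,3$,
\[
    \max_{t\in[l_{k+1},l_k]} \bigl|r_k^{(m)}(t)\bigr| \;\leq\; C_m\,|c_k|\,h_k^{\,9-m},
\]
with $C_m$ an absolute constant. Substituting $|c_k|=O(2^{5k})$ and $h_k=2^{-(k+1)}$, the exponent of $2$ on the right-hand side equals $(m-4)k + O(1)$, which tends to $-\infty$ for every $m\leq 3$; hence $r_k,r_k',r_k'',r_k''' \to 0$ uniformly on $[l_{k+1},l_k]$ as $k\to\infty$. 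This is precisely where the hypothesis $s_k-1=2^{-2k}$ is used: it is exactly the decay rate needed to defeat the $2^{5k}$ growth of $c_k$ even after three differentiations, whereas a slower decay such as $2^{-k}$ leaves the $m=3$ exponent nonnegative, the phenomenon analyzed in Appendix~\ref{apdx:sk}.

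Finally, since $f_1^{(m)}(t) = (-t^2)^{(m)} + r_k^{(m)}(t)$ on each interval, the estimate yields $\lim_{t\to 0^+} f_1^{(m)}(t) = (-t^2)^{(m)}\big|_{t=0}$ for $m=0,1,2,3$, matching the left-hand values $0,-2,0$. To upgrade these limits of derivatives into genuine one-sided derivatives at $0$, I would invoke the standard mean-value-theorem fact that if $g$ is continuous at $0$ and $\lim_{t\to 0^+} g'(t)=L$ exists, then $g$ has right derivative $L$ at $0$ with $g'$ continuous there; applying this successively to $g=f_1$, then $g=f_1'$, then $g=f_1''$ shows $f_1$ is three times differentiable at $0$ with $f_1'''$ continuous at $0$. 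Combined with the smoothness away from $0$, this gives $f_1\in\Cthree[-1,1]$. I expect the derivative estimate balancing the growth of $c_k$ against the shrinking interval length to be the only genuine obstacle; everything else is bookkeeping.
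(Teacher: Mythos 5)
Your proof is correct, and it takes a genuinely different route from the paper's. The paper works in the monomial basis: its Lemma~\ref{lem:pk_coeffs} records the explicit coefficients $c_j$ of $p_k$ (obtained by symbolic computation in \matlab), rewrites them as functions of $t$ via the substitution $k=-\ceil{\log_2 t}$, and then verifies the three one-sided difference quotients at $t=0$ term by term using the elementary bounds $2^{-\ceil{\log_2 t}}\leq t^{-1}$ and $2^{\ceil{\log_2 t}}\leq 2t$. You instead read off the root structure forced by the interpolation conditions to obtain the basis-free closed form
\begin{equation*}
	p_k(t)+t^2 \;=\; c_k\,(t-l_{k+1})^4(t-l_k)^4(t-t_k),
\end{equation*}
determine the scaling $|c_k|\asymp (s_k-1)\,2^{7k}=2^{5k}$ from the one remaining slope condition, bound the first three derivatives uniformly on $[l_{k+1},l_k]$ by $|c_k|\,h_k^{9-m}=\bigO(2^{(m-4)k})$ with $h_k=2^{-(k+1)}$, and finish with the standard mean-value (derivative-limit) theorem to convert one-sided limits of derivatives into one-sided derivatives at $0$; every step here is sound, including the count of roots with multiplicity and the Leibniz estimate. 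What your approach buys: it is self-contained (no symbolically computed coefficients are needed), it isolates exactly where the hypothesis enters --- the third-derivative bound is $(s_k-1)2^{k}$, which both explains why $s_k=1+2^{-k}$ fails (the phenomenon of Appendix~\ref{apdx:sk}) and why $s_k=1+2^{-2k}$ succeeds --- and it extends essentially verbatim to the $\Cq$ generalization conjectured in the paper's remark, where the polynomial has degree $2q+3$ and vanishes to order $q+1$ at the endpoints. What the paper's approach buys: the explicit coefficient formulas of Lemma~\ref{lem:pk_coeffs} are reused in the proof of Theorem~\ref{thm:f1_isolated} (the isolated-maximizer property) and in Appendix~\ref{apdx:sk}, so the one symbolic computation is amortized over several results, whereas your argument would need a separate (though straightforward, via the same factorized form) treatment of monotonicity of $p_k$ to recover Theorem~\ref{thm:f1_isolated}.
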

\begin{theorem}
\label{thm:f1_isolated}
For $f_1$ defined in~\eqref{eq:f1}, if $s_k = 1 + 2^{-2k}$, then $t=0$ is an isolated maximizer of $f_1$, as well as an isolated
maximizer of $\gmax=\max(f_{1},f_{2})$.
\end{theorem}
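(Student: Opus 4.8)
The plan is to prove the stronger local statement that $f_1(t)<0$ for every $t\ne 0$ in some neighborhood of $0$; since $f_1(0)=0$, this makes $0$ a strict, and hence isolated, local maximizer of $f_1$. On $[-1,0)$ this is immediate because $f_1(t)=-t^2<0$ there, so the whole question reduces to showing $p_k(t)<0$ on $[l_{k+1},l_k]$ for all sufficiently large $k$ (equivalently, for all small $t>0$). Granting this, the conclusion for $\gmax$ comes for free: since $f_2(t)=f_1(-t)$, the same statement gives $f_2(t)<0$ for all small $t\ne 0$, whence $\gmax(t)=\max(f_1(t),f_2(t))<0=\gmax(0)$ for all such $t$, so $0$ is an isolated maximizer of $\gmax$ as well.

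The key step is to exploit the self-similarity of the construction. On $[l_{k+1},l_k]=[2^{-(k+1)},2^{-k}]$ I would substitute $t=2^{-k}\tau$ with $\tau\in[\tfrac{1}{2},1]$ and set $q_k(\tau)\coloneqq 4^k p_k(2^{-k}\tau)$, so that $p_k(t)=4^{-k}q_k(2^k t)$ and the sign of $p_k$ on $[l_{k+1},l_k]$ equals the sign of $q_k$ on $[\tfrac{1}{2},1]$. A direct computation shows that, under this rescaling, the ten Hermite conditions defining $p_k$ become ten conditions on $q_k$ that are \emph{independent of $k$} --- namely the value and first three derivatives matching those of $-\tau^2$ at $\tau=\tfrac{1}{2}$ and $\tau=1$, together with $q_k(\tfrac{3}{4})=-\tfrac{9}{16}$ --- with the sole exception of the midpoint-slope condition, which is $q_k'(\tfrac{3}{4})=-\tfrac{3}{2}s_k$.

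Since $-\tau^2$ is exactly the interpolant obtained when $s_k=1$, linearity of Hermite interpolation then yields the clean decomposition
\[
	q_k(\tau)=-\tau^2+(s_k-1)\,r(\tau),\qquad \tau\in[\tfrac{1}{2},1],
\]
where $r$ is a \emph{fixed} degree-nine polynomial, independent of $k$, all of whose endpoint data (value and first three derivatives at $\tfrac{1}{2}$ and at $1$) vanish and which satisfies $r(\tfrac{3}{4})=0$, $r'(\tfrac{3}{4})=-\tfrac{3}{2}$. Writing $R\coloneqq\max_{\tau\in[1/2,1]}|r(\tau)|<\infty$ and using $s_k-1=2^{-2k}=4^{-k}$ together with $\tau^2\ge\tfrac{1}{4}$ on $[\tfrac{1}{2},1]$, we get
\[
	q_k(\tau)=-\tau^2+4^{-k}r(\tau)\le -\tfrac{1}{4}+4^{-k}R,
\]
which is strictly negative as soon as $4^{-k}R<\tfrac{1}{4}$, i.e.\ for all $k\ge k_0$ with $k_0$ fixed. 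Hence $p_k<0$ on $[l_{k+1},l_k]$ for all $k\ge k_0$, so $f_1(t)<0$ on $(0,l_{k_0}]$, and the first paragraph then completes the argument on the neighborhood $[-l_{k_0},l_{k_0}]$ of $0$.

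The main obstacle is the bookkeeping in the second paragraph: one must verify carefully that after rescaling every endpoint condition, as well as the midpoint value condition, becomes $k$-independent while only the midpoint-slope condition retains the factor $s_k$. This is precisely what collapses all $k$-dependence into the single scalar $s_k-1=4^{-k}\to 0$ multiplying one fixed polynomial $r$, reducing uniform-in-$k$ sign control over infinitely many distinct degree-nine polynomials to an elementary bound; without it one would confront a genuinely different polynomial on every interval. It is worth noting that only large $k$ enters, so the fact that $s_k$ is not close to $1$ for small $k$ (e.g.\ $s_0=2$) is irrelevant to isolatedness; alternatively, applying the same estimate to $q_k'(\tau)=-2\tau+4^{-k}r'(\tau)$ shows $q_k$, and hence $p_k$, is strictly decreasing for large $k$, recovering the monotonicity picture described informally before the theorem.
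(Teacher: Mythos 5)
Your proof is correct, and it takes a genuinely different route from the paper's. The paper's proof is coefficient-driven: it takes the explicit coefficients of $p_k$ from Lemma~\ref{lem:pk_coeffs} (obtained by symbolic computation), rescales the derivative onto $\zeta\in[1,2]$ via $t=l_{k+1}\zeta$, and shows by term-by-term sign bookkeeping that $p_k^\prime<0$ on $[l_{k+1},l_k]$ for all $k\geq 13$, so each $p_k$ is strictly decreasing and isolation follows. You instead rescale the \emph{interpolation conditions} rather than the computed polynomial: with $q_k(\tau)=4^k p_k(2^{-k}\tau)$, all ten Hermite conditions become $k$-independent except the midpoint slope (your rescaled data are correct: value and first three derivatives of $-\tau^2$ at $\tau=\tfrac12$ and $\tau=1$, value $-\tfrac{9}{16}$ and slope $-\tfrac32 s_k$ at $\tau=\tfrac34$), so uniqueness and linearity of Hermite interpolation yield $q_k(\tau)=-\tau^2+(s_k-1)r(\tau)$ with $r$ a single fixed polynomial, and everything reduces to the elementary bound $-\tfrac14+4^{-k}R$. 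This buys real advantages: no reliance on the symbolically computed coefficients (the paper's chain of reasoning here ultimately rests on a \matlab\ computation), the conclusion visibly holds for \emph{any} sequence with $s_k\to1$ (showing that isolation is far more robust than the $\Cthree$ property, which needs the specific rate $2^{-2k}$), and the argument transfers essentially verbatim to the degree-$(2q+3)$ constructions for $\Ck$ functions conjectured in the paper, settling the isolation half of that conjecture. What the paper's approach buys in exchange is only concreteness, e.g., the explicit threshold $k\geq 13$. One refinement you should make: your main argument shows $f_1<0$ near $0$, i.e., that $t=0$ is a \emph{strict} local maximizer, which by itself does not exclude other (negative-valued) local maximizers of $f_1$ accumulating at $0$; the notion of ``isolated'' used in this paper (judging from Remark~\ref{rem:notC1} and from the paper's own monotonicity-based proof) is that no other local maximizers exist in a neighborhood. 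Your closing observation that $q_k^\prime(\tau)=-2\tau+4^{-k}r^\prime(\tau)\leq -1+4^{-k}\max_{\tau\in[1/2,1]}|r^\prime(\tau)|<0$ for all large $k$ supplies exactly this stronger statement, for $f_1$ and hence (by the reflection symmetry) for $\gmax$ as well, so that remark should be promoted from an aside into the main line of the proof.
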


Theorem~\ref{thm:diff_at_max} shows that $\gmax = \max(f_{1},f_{2})$ is differentiable at~$t=0$ with $\gmax^\prime(0) = 0$.
However, even though $f_1$ and $f_2$ are $\Cthree$ and  $t=0$ is an isolated maximizer of $\gmax$
with the choice of $s_k = 1 + 2^{-2k}$,
by construction, we have that (i) $t_k \to 0$ as $k \to \infty$, and
(ii) $\gmax$ is nondifferentiable at every $t_k$.
Hence, although $\gmax$ is differentiable at $t=0$, 
it is not $\Cone$ at this point, let alone twice differentiable.
Plots of $\gmax$ and its first and second derivatives are shown in Figure~\ref{fig:gmax},
where we see the discontinuities in $\gmax^{\prime}$ for all $t_k$ and $-t_k$.

\begin{remark}
For any $q \geq 1$, it seems that the same argument
extends to show that $\gmax$ is not necessarily $\Cone$ at $t=0$ when defined by functions~$f_j$ 
that are $\Ck$, using polynomials $p_k$ of degree $2q+3$.
From computational investigations for $q \in \{1,2,3,4,5\}$,
we conjecture that $s_k = 1 + 2^{-(k+1)}$ for $q=1$ and $s_k = 1 + 2^{-(q-1)k}$ for $q\geq2$ 
are suitable choices in general 
to obtain that $f_1$ is $\Ck$ with $t=0$ being an isolated maximizer.
It is not clear how to extend such an argument to the $\Cinf$ case.
\end{remark}

\begin{figure}[t]
	\begin{subfigure}{\textwidth}
		\centering
		\includegraphics[scale=\scalederiv,trim=0mm 0mm 0mm 0mm,clip]{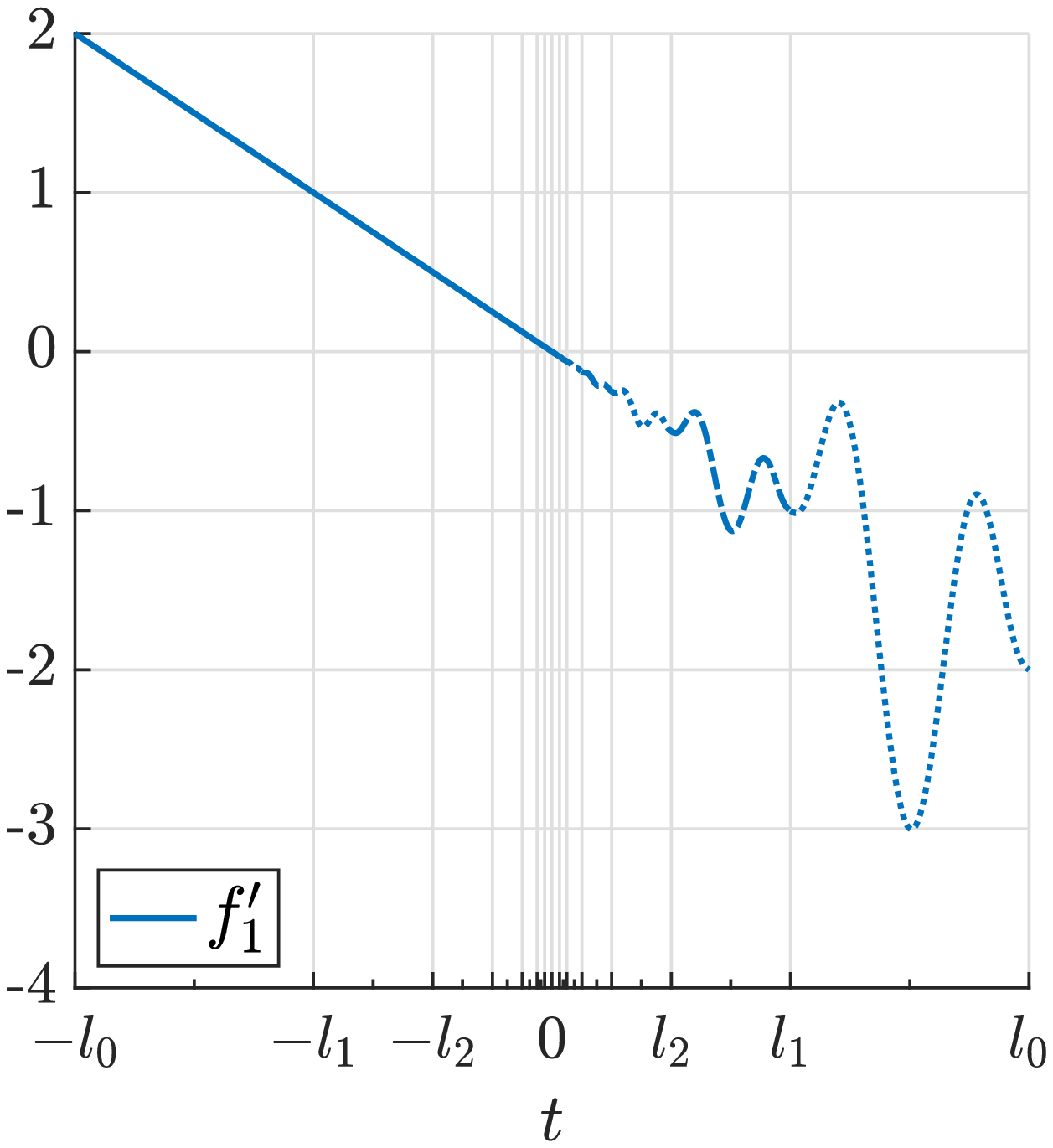}
		\includegraphics[scale=\scalederiv,trim=0mm 0mm 0mm 0mm,clip]{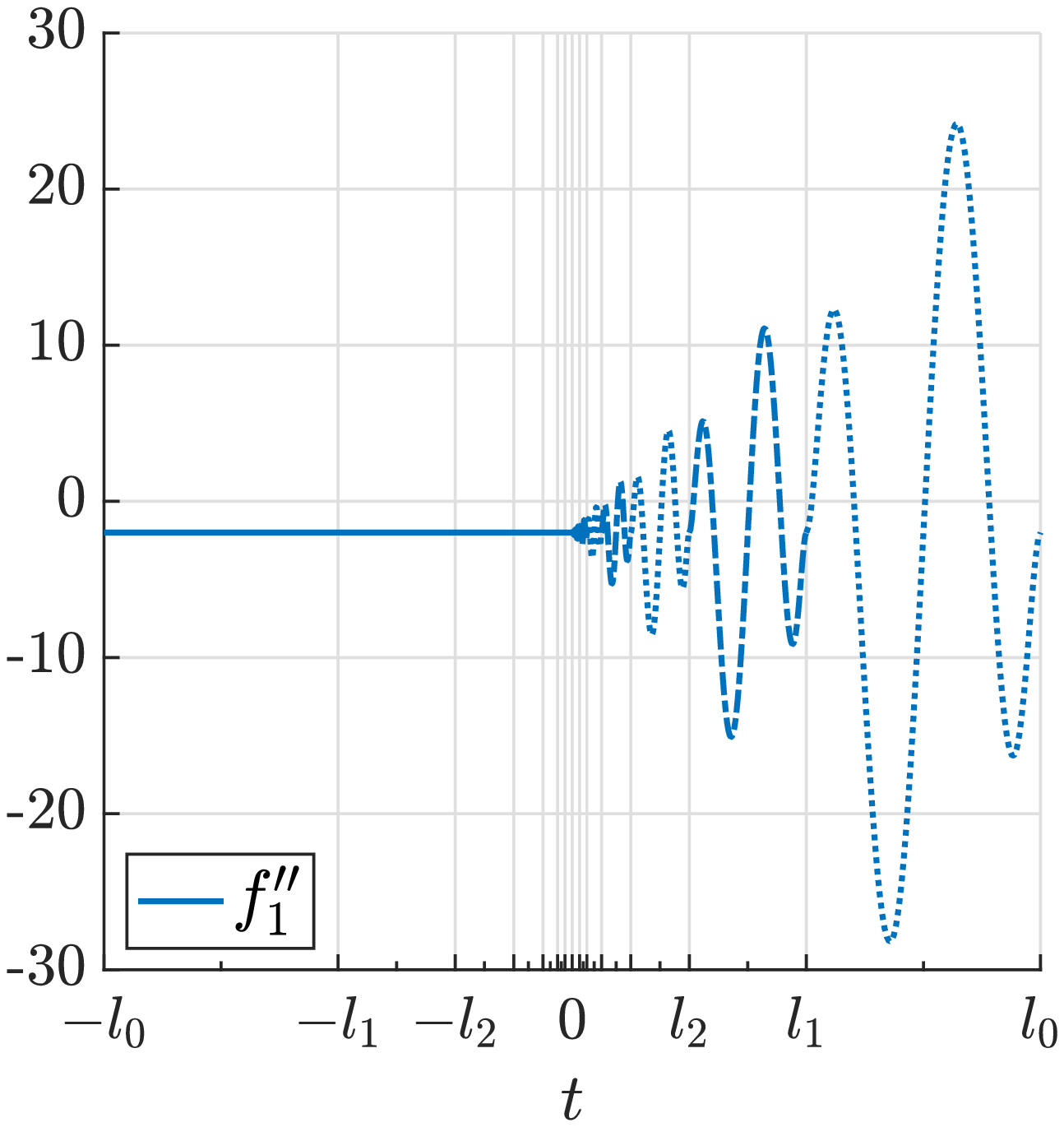}
		\includegraphics[scale=\scalederiv,trim=0mm 0mm 0mm 0mm,clip]{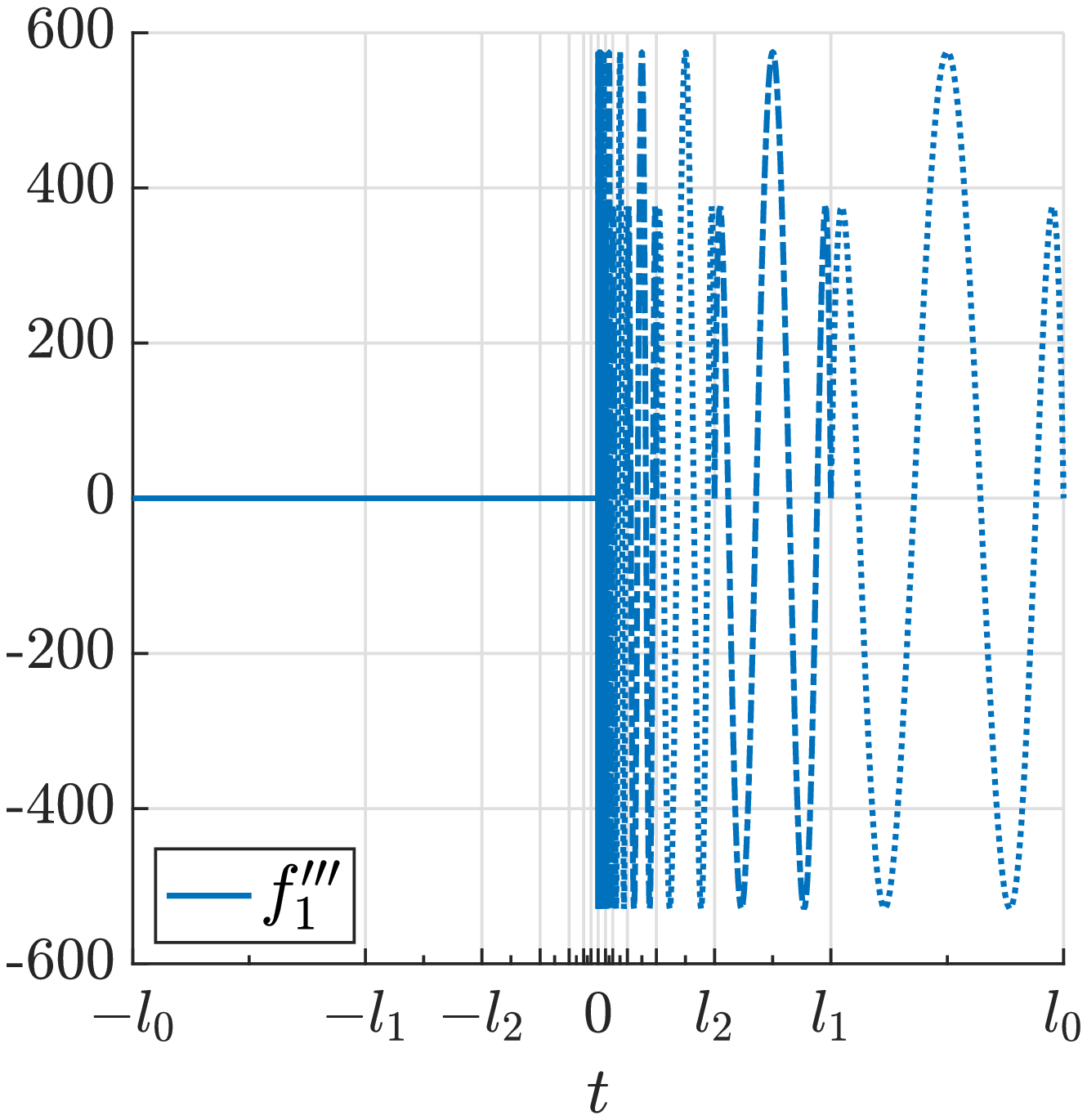}
		\caption{$s_k = 1 + 2^{-k}$}
		\label{fig:fppp_notC3}
	\end{subfigure}
	\begin{subfigure}{\textwidth}
		\centering
		\includegraphics[scale=\scalederiv,trim=0mm 0mm 0mm 0mm,clip]{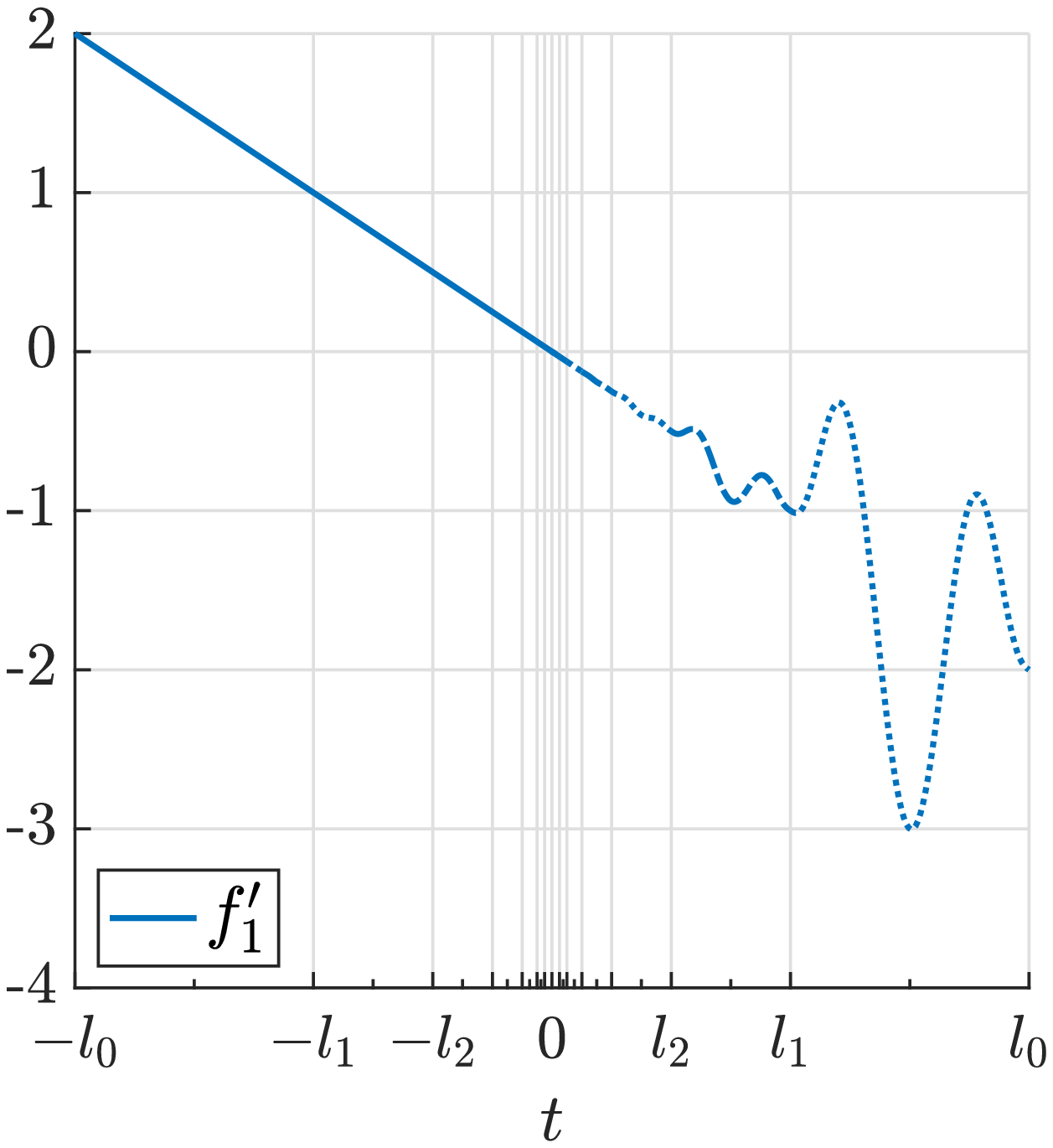}
		\includegraphics[scale=\scalederiv,trim=0mm 0mm 0mm 0mm,clip]{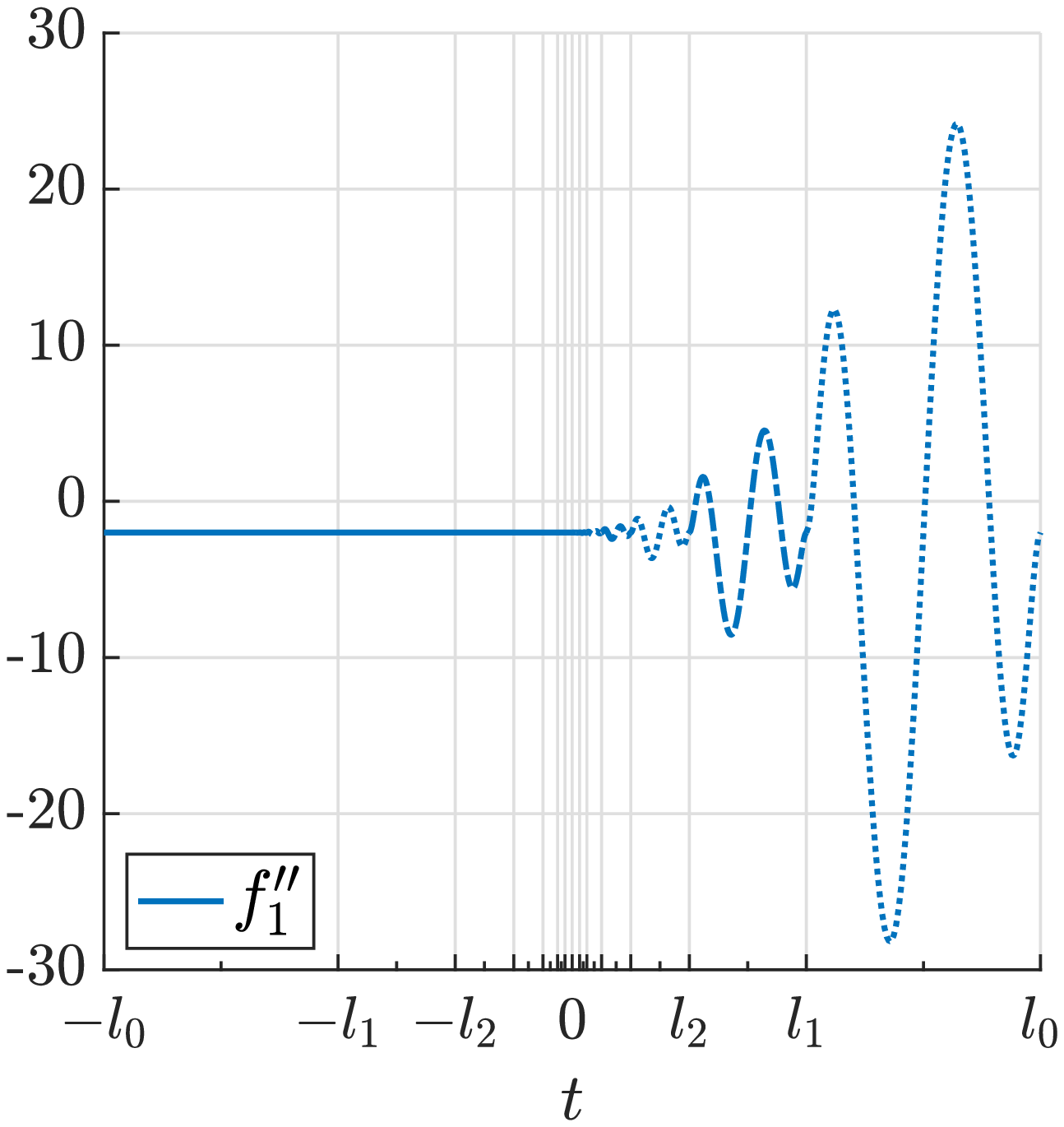}
		\includegraphics[scale=\scalederiv,trim=0mm 0mm 0mm 0mm,clip]{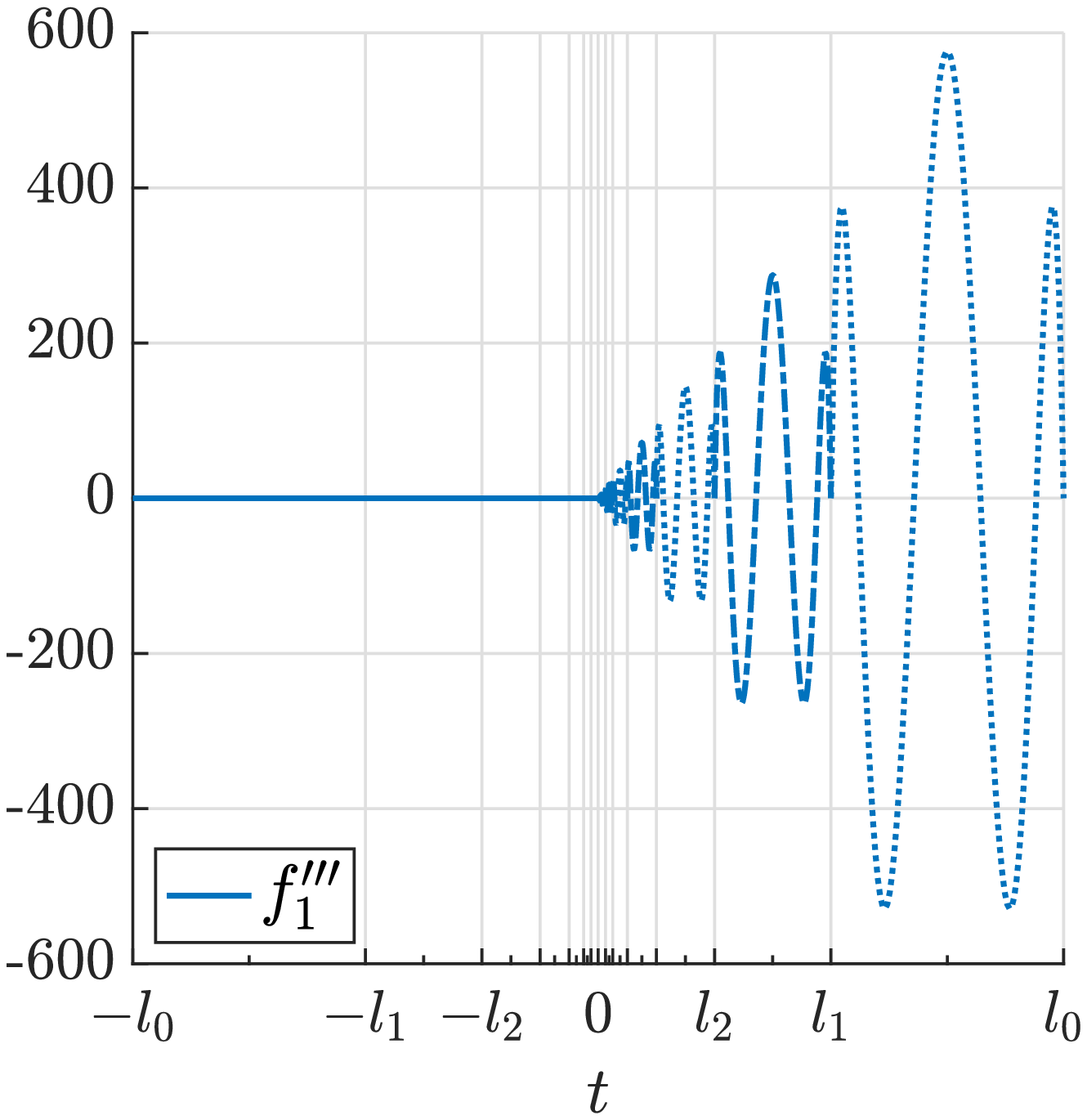}
		\caption{$s_k = 1 + 2^{-2k}$}
		\label{fig:fppp_C3}
	\end{subfigure}	
\caption{Plots of the first three derivatives of $f_1$ for two different sequences $\{s_k\}$; 
	their $-t^2$ parts are shown in solid, while their $p_k$ 
	parts are shown in dotted for $k$ even and dash-dot for $k$ odd.}
\label{fig:derivs}
\end{figure}

\begin{figure}[t]
	\begin{subfigure}{\textwidth}
		\centering
		\includegraphics[scale=\scalederiv,trim=0mm 0mm 0mm 0mm,clip]{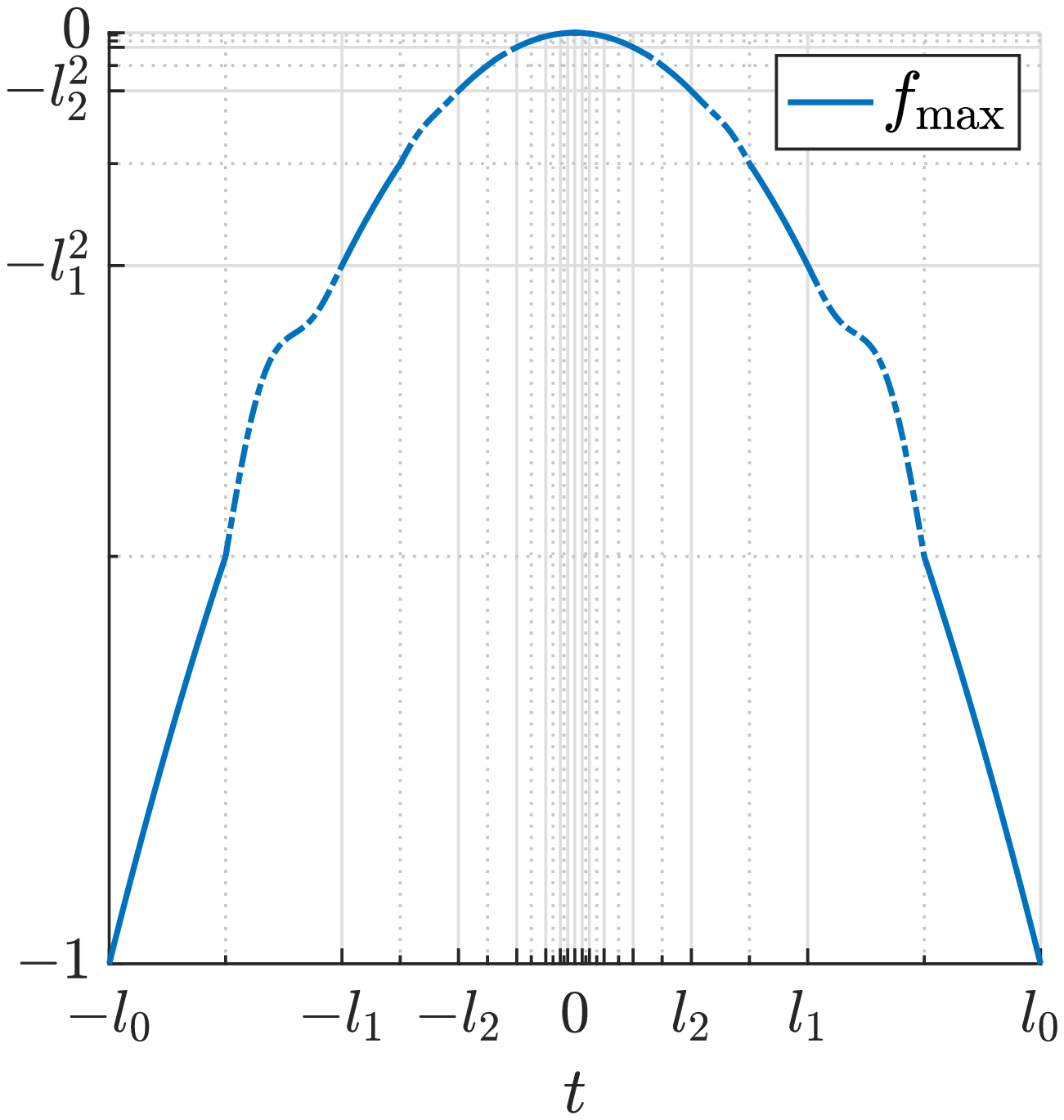}
		\includegraphics[scale=\scalederiv,trim=0mm 0mm 0mm 0mm,clip]{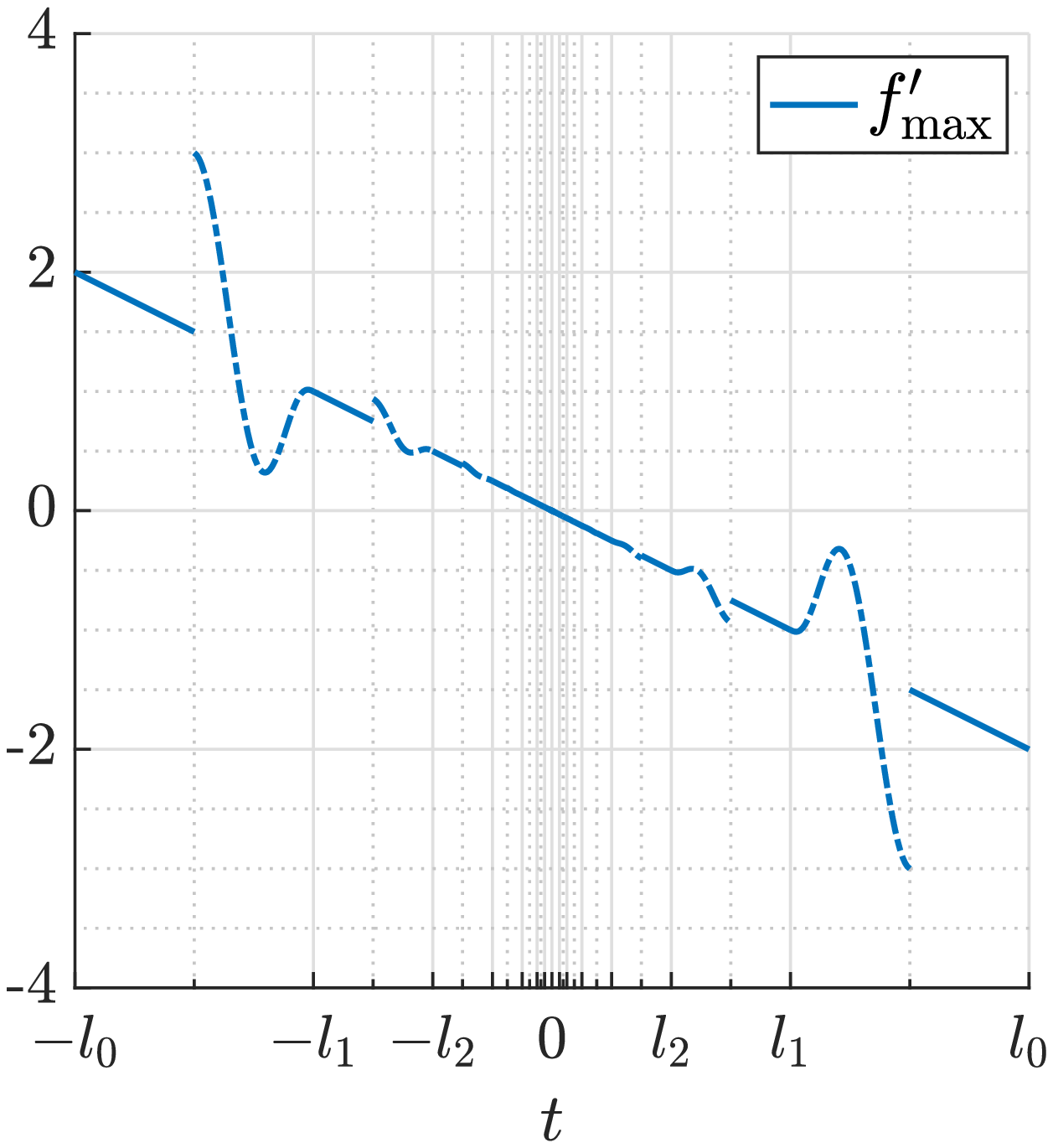}
		\includegraphics[scale=\scalederiv,trim=0mm 0mm 0mm 0mm,clip]{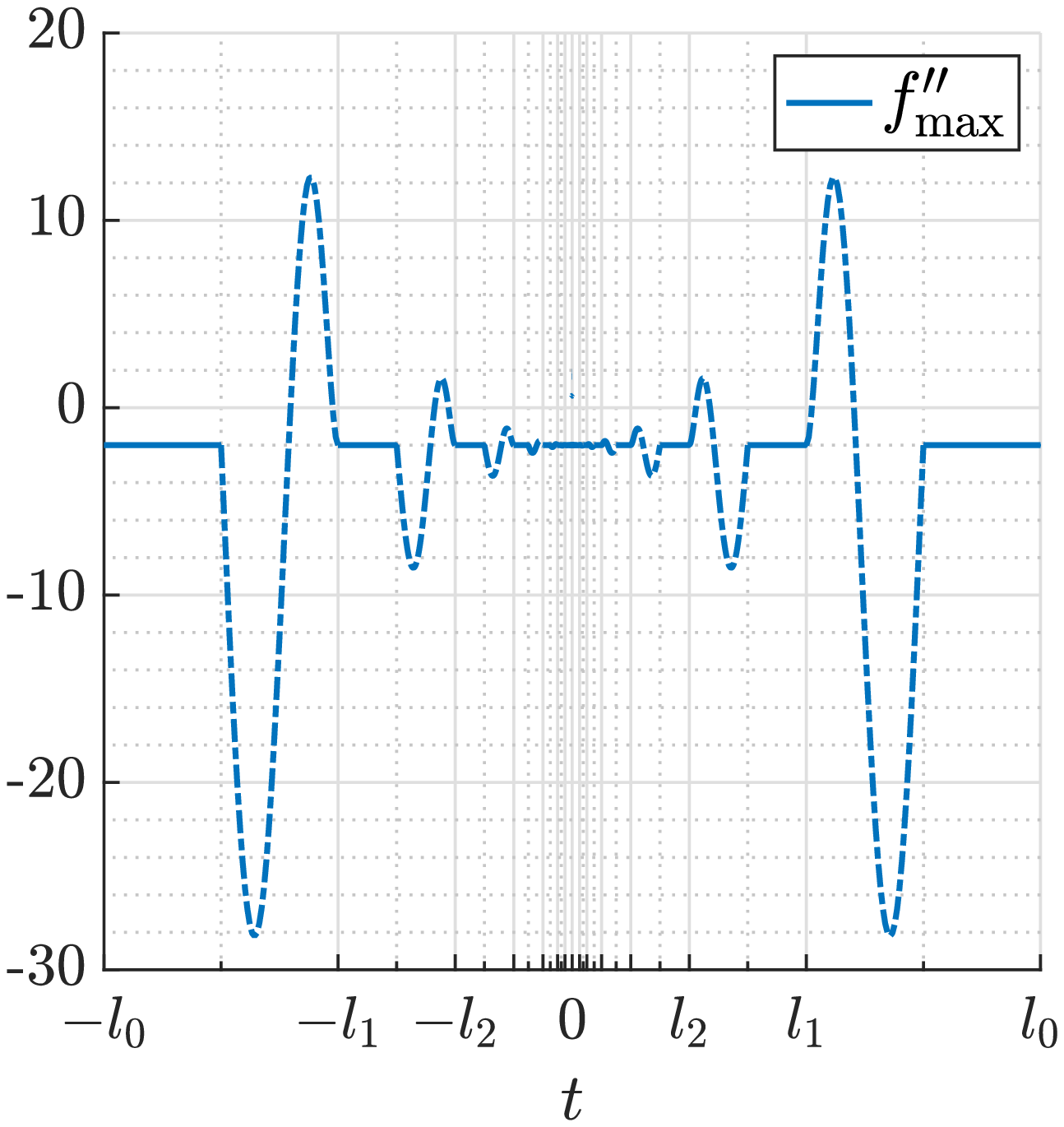}
	\end{subfigure}
\caption{Plots of $\gmax$ and its first and second derivatives; 
	their $-t^2$ parts are shown in solid, while their $p_k$
	parts are shown in dash-dot.}
\label{fig:gmax}
\end{figure}

\section{Smoothness of eigenvalue extrema and applications} 
\label{sec:eigs}
We will need the following well-known theorem:

\begin{theorem}[Rellich]
\label{thm:rell}
Let $H : \dom \to \H^{n}$ be an analytic Hermitian matrix family in one real variable.
Let $\opt \in \dom$ be given,
and let $H(\opt)$ have eigenvalues $\tilde \mu_{j}\in\R$, $j=1,\ldots,n$, not necessarily distinct. 
Then, for sufficiently small $|\eps|$, the eigenvalues of $H(\opt + \eps)$ can be expressed as convergent power series 
\beq\label{powser}
          \mu_{j}(\eps) = \tilde \mu_{j} +  \tilde \mu^{(1)}_{j}\eps + \tilde \mu^{(2)}_{j} \eps^{2} + \cdots, \quad j=1,\ldots, n.
\eeq
\end{theorem}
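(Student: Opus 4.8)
The plan is to reduce the statement to the classical Puiseux expansion of the roots of the characteristic polynomial and then to use the reality of the Hermitian spectrum to rule out all fractional powers; this reality step is the crux. First I would complexify. Since $H$ is real analytic near $\opt$, each entry $H_{ij}$ extends to a function holomorphic in a complex variable $z$ on a neighborhood of $\opt$, so the characteristic polynomial $P(\lambda,z)=\det(\lambda I - H(z))$ is monic of degree $n$ in $\lambda$ with coefficients holomorphic in $z$ near $\opt$. By Puiseux's theorem for the roots of a polynomial with holomorphic coefficients, the $n$ roots (the eigenvalues) organize, near $z=\opt$, into finitely many cycles: a cycle of length $p$ consists of the values $\tilde\mu + \phi(\omega^{j} s)$ for $j=0,\dots,p-1$, where $s=(z-\opt)^{1/p}$, $\omega=\e^{2\pi\imagunit/p}$, and $\phi(s)=\sum_{k\ge1}c_k s^{k}$ is a power series convergent for $|s|$ small. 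It then remains to show that every cycle has $p=1$, for then each branch is an honest power series in $z-\opt$.

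Suppose, for contradiction, that some cycle has $p\ge2$. The terms of $\phi$ with $p\mid k$ contribute the same real quantity to all $p$ branches, so after subtracting this common part the branches are distinguished, at lowest order, by a term $c_m s^{m}=c_m(z-\opt)^{m/p}$ with $c_m\neq0$ and $p\nmid m$ (such an $m$ exists, since otherwise $\phi$ would be a power series in $(z-\opt)$ and the cycle would reduce to $p=1$). Writing $d=\gcd(m,p)<p$ and evaluating at $z=\opt+\eps$ with $\eps>0$ small, we have $s=\eps^{1/p}>0$ and $(z-\opt)^{m/p}=\eps^{m/p}>0$, so the imaginary part of the $j$th branch is, to leading order, $\Im\!\left(c_m\omega^{jm}\right)\eps^{m/p}$, which cannot be cancelled by the higher-order terms. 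Reality of the eigenvalues for $\eps>0$ therefore forces $c_m\omega^{jm}\in\R$ for all $j$; since $\{\omega^{jm}:j\}$ consists of $p/d$ equally spaced points on the unit circle, this is possible only if $p/d\le2$, whence $p/d=2$, $p$ is even, and $m/p$ is a half-integer. But then, evaluating at $\eps<0$, the quantity $\eps^{m/p}$ is purely imaginary and nonzero, so the same leading term gives a nonzero imaginary part to some branch, contradicting reality for $\eps<0$. Hence $p=1$.

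With $p=1$ in every cycle, each eigenvalue branch is a power series in $\eps=z-\opt$ convergent on a genuine disk about $\opt$; restricting to real $\eps$ of sufficiently small modulus yields the $n$ convergent expansions $\mu_{j}(\eps)=\tilde\mu_{j}+\tilde\mu^{(1)}_{j}\eps+\cdots$ claimed in~\eqref{powser}. I would emphasize that these branches are the analytic continuations of the eigenvalues and need not be globally ordered---exactly the phenomenon illustrated by $H(t)=\diag(t,-t)$ in the introduction---so the $\mu_{j}$ may cross. The main obstacle is the reality argument of the previous paragraph, and within it the bookkeeping is the delicate part: one must strip off the common integer-exponent terms before isolating the lowest genuinely fractional term, and track the $\gcd$ so as to count the distinct leading coefficients correctly. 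A cleaner but less self-contained alternative, which additionally produces analytic eigenprojections and eigenvectors, is Kato's resolvent approach: enclose $\tilde\mu$ by a contour $\Gamma$ separating it from the rest of the spectrum, form the analytic spectral projection $P(z)=\tfrac{1}{2\pi\imagunit}\oint_\Gamma(\zeta I-H(z))^{-1}\,d\zeta$, and use an analytic unitary transformation to reduce to a smaller analytic Hermitian family; however, this only localizes the difficulty, and the reality step remains essential.
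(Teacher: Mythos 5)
Your proposal is correct and takes essentially the same approach as the paper: the paper does not prove Theorem~\ref{thm:rell} itself, but instead points to \cite[pp.~XIX--XX]{Kat82} and sketches precisely this argument---express the eigenvalues as fractional-power (Puiseux) series via the theory of algebraic functions, then use the fact that $H$ is Hermitian, so its eigenvalues are real on both sides of $\opt$, to conclude that all non-integral fractional powers vanish. Your write-up simply supplies the details of that reality step (the averaging over branches, the $\gcd$ bookkeeping, and the $\eps<0$ case) that the paper delegates to the citation.
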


Theorem~\ref{thm:rell} is often stated as part of a deeper theorem of Rellich regarding power series expansion of the eigenvectors;
in comparison, the proof of~\eqref{powser} is significantly easier, using the theory of algebraic functions
to express the eigenvalues as fractional powers of $\eps$ and then arguing that, because $H$ is Hermitian, non-integral
fractional powers vanish~\cite[pp.~XIX--XX]{Kat82}.

We now apply Theorems~\ref{thm:rell} and \ref{thm:c2_lipschitz} to obtain smoothness results for 
eigenvalue extrema of univariate real analytic Hermitian matrix families, 
as well as analogous results for singular value extrema.
Subsequently, we discuss how these results are useful in several important applications.

\begin{theorem}
\label{thm:eig_tcd}
Let $H : \dom \to \H^{n}$ be an analytic Hermitian matrix family in one real variable
on an open domain $\dom \subseteq \R$, and let $\lmax:\H^{n}\to\R$ denote
algebraically largest eigenvalue.  
Then $\lmax \circ H$ is $\Ctwo$ with Lipschitz second derivative near all of its local maximizers.
\end{theorem}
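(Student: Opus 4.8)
The plan is to reduce Theorem~\ref{thm:eig_tcd} to the general max-function result, Theorem~\ref{thm:c2_lipschitz}, by using Rellich's theorem to express $\lmax \circ H$ locally as the pointwise maximum of finitely many real analytic functions. I would fix an arbitrary local maximizer $\opt \in \dom$ of $\lmax \circ H$ and set $\gamma = \lmax(H(\opt))$. First I would invoke Theorem~\ref{thm:rell} to write, for all sufficiently small $|\eps|$, the (unordered) eigenvalues of $H(\opt + \eps)$ as $n$ convergent power series $\mu_1(\eps),\ldots,\mu_n(\eps)$ in $\eps$. Each $\mu_j$ is therefore real analytic, i.e.\ $\Can$, in a neighborhood of $\eps = 0$, and in particular $\Cthree$ (indeed $\Cinf$) there.

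Next, since $\lmax(H(\opt+\eps)) = \max_{j \in \jset} \mu_j(\eps)$ for all small $|\eps|$, the composition $\lmax \circ H$ coincides near $\opt$ with the max function $\gmax$ built from the $\Can$ functions $f_j(\eps) = \mu_j(\eps)$. It then remains to verify that the hypotheses of Theorem~\ref{thm:c2_lipschitz} hold at $\opt$: there must exist indices $j_1, j_2 \in \jset$ (possibly equal) such that $\mu_{j_1}$ attains the maximum for all small $\eps < 0$ and $\mu_{j_2}$ attains it for all small $\eps > 0$. This is precisely the crossing property of real analytic functions: for any two distinct branches, their difference is real analytic and not identically zero, so its zeros are isolated; consequently, among the finitely many branches a single one dominates on a one-sided punctured neighborhood of $0$ on each side. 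This is the content of \cite[Corollary~1.2.7]{KraP02}, as noted immediately after Theorem~\ref{thm:c2_lipschitz}.

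Finally, applying Theorem~\ref{thm:c2_lipschitz} with the selected branches $f_{j_1}, f_{j_2}$ yields that $\lmax \circ H$ is $\Ctwo$ with Lipschitz second derivative near~$\opt$; since $\opt$ was an arbitrary local maximizer, the result follows on a neighborhood of each such point. (Lemma~\ref{lem:extrema} may be cited to note that $\opt$ is simultaneously a local maximizer of each dominant branch, although this is already implicit in the one-sided selection.)

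I expect the only genuine subtlety to be the justification that a single analytic branch dominates on each side of $\opt$, that is, that the finiteness of crossings of real analytic functions indeed supplies the one-sided selectors $j_1, j_2$ demanded by Theorem~\ref{thm:c2_lipschitz}. Once that is in hand, the argument is essentially a bookkeeping composition of Rellich's expansion with Theorem~\ref{thm:c2_lipschitz}, requiring no new estimates: the Lipschitz-second-derivative conclusion is inherited directly from the $\Cthree$ smoothness of the two selected branches via Theorem~\ref{thm:c2_lipschitz}.
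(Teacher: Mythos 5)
Your proposal is correct and follows essentially the same route as the paper: invoke Rellich's theorem to obtain analytic eigenvalue branches, then apply Theorem~\ref{thm:c2_lipschitz}. The one-sided dominance of a single analytic branch that you carefully justify is exactly the point the paper disposes of in the sentence following Theorem~\ref{thm:c2_lipschitz}, citing \cite[Corollary~1.2.7]{KraP02}.
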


\begin{proof}
Let $\opt \in \dom$ be any local maximizer of $\lmax \circ H$, 
with $H(\opt)$ having eigenvalues $\tilde\mu_{j}$.
By Theorem~\ref{thm:rell}, in a neighborhood of~$\opt$, the eigenvalues of $H(\opt + \eps)$
can be expressed as $\mu_{j}(\eps)$, $j=1,\ldots,n$, where the $\mu_{j}(\eps)$ are locally given by the power series \eqref{powser}.
Since $\lmax(H(\opt + \eps)) = \max_{j \in \{1,\ldots,n\}} \mu_j(\eps)$ with all the $\mu_j$ analytic,
we can apply Theorem~\ref{thm:c2_lipschitz} to these functions, completing the proof.
\end{proof}

\begin{remark}
The proof of Theorem~\ref{thm:eig_tcd} is essentially the same as the proof given by Boyd and Balakrishnan \cite{BoyB90}, presented differently and in
a more general context.
\end{remark}

\begin{corollary}
\label{cor:eigC2}
Let $H : \dom \to \H^{n}$ be an analytic Hermitian matrix family in one real variable
on an open domain $\dom \subseteq \R$. 
Then:
\begin{enumerate}[label=(\roman*),font=\normalfont,leftmargin=0.75cm]
	\item $\lmin \circ H$ is $\Ctwo$ near all of its local minimizers, 
		where $\lmin$ denotes algebraically smallest eigenvalue;
	\item $\srad \ \circ \ H$ is $\Ctwo$ near all of its local maximizers, 
		where $\srad$ denotes spectral radius $(\max(\lmax,-\lmin))$;
	\item $\sradin \circ H$ is $\Ctwo$ near all of its local minimizers at which the minimal value is nonzero,
	 	where $\sradin$ denotes inner spectral radius 
		$($0 if $H$ is singular, $\srad(H^{-1})^{{-1}}$ otherwise$)$.
\end{enumerate}
Furthermore, in each case the second derivative is Lipschitz near the relevant maximizers/minimizers.
\end{corollary}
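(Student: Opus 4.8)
The plan is to derive all three parts from Theorems~\ref{thm:eig_tcd} and~\ref{thm:c2_lipschitz}, in each case reducing the relevant extremum of $H$ to a pointwise maximum of finitely many \emph{real analytic} functions and then invoking the real analytic case of Theorem~\ref{thm:c2_lipschitz}, exactly as in the proof of Theorem~\ref{thm:eig_tcd}. Throughout, near a fixed extremizer $\opt$ I would use Theorem~\ref{thm:rell} to write the eigenvalues of $H(\opt+\eps)$ as analytic functions $\mu_1(\eps),\ldots,\mu_n(\eps)$, with $\mu_j(0)=\tilde\mu_j$.

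For (i), I would use $\lmin(M) = -\lmax(-M)$, so that $\lmin \circ H = -(\lmax \circ (-H))$. Since $-H$ is again an analytic Hermitian matrix family and a local minimizer of $\lmin \circ H$ is precisely a local maximizer of $\lmax \circ (-H)$, applying Theorem~\ref{thm:eig_tcd} to $-H$ immediately gives that $\lmin \circ H$ is $\Ctwo$ with Lipschitz second derivative near all its local minimizers, as negation preserves both properties. For (ii), I would write $\srad(H(\opt+\eps)) = \max_j |\mu_j(\eps)| = \max\bigl(\{\mu_j(\eps)\} \cup \{-\mu_j(\eps)\}\bigr)$, exhibiting $\srad \circ H$ locally as the pointwise maximum of the $2n$ real analytic functions $\pm\mu_j$. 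At a local maximizer, Theorem~\ref{thm:c2_lipschitz} then yields the claim. The crucial observation is that rewriting each $|\mu_j|$ as $\max(\mu_j,-\mu_j)$ folds the absolute values into the outer maximum, so no smoothness is lost where an eigenvalue changes sign.

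For (iii), this folding trick fails, since $\sradin$ is a \emph{minimum} of absolute values. Instead I would exploit the nonzero hypothesis: at a local minimizer $\opt$ with $\sradin(H(\opt)) = \min_j|\tilde\mu_j| > 0$, every eigenvalue $\tilde\mu_j$ is nonzero. Because each $\mu_j$ is analytic, hence continuous, with $\mu_j(0)=\tilde\mu_j \neq 0$, each $\mu_j$ stays bounded away from zero, and therefore retains a fixed sign, on a neighborhood of $\opt$; consequently $|\mu_j| = \sgn(\tilde\mu_j)\,\mu_j$ is real analytic there, and $\sradin$ coincides with $\min_j|\mu_j|$ locally (the singular branch of its definition never occurs, since $H$ is nonsingular near $\opt$). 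Writing $-\sradin(H(\opt+\eps)) = \max_j\bigl(-|\mu_j(\eps)|\bigr)$ then exhibits $-(\sradin\circ H)$ as a maximum of finitely many real analytic functions with $\opt$ as a local maximizer; Theorem~\ref{thm:c2_lipschitz} finishes the argument, and negating gives the result for $\sradin \circ H$.

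The main obstacle is precisely the zero-crossing issue in part (iii): without the assumption that the minimal value is nonzero, some $|\mu_j|$ need not even be differentiable near $\opt$, and the reduction to the real analytic max-function framework breaks down. The nonzero hypothesis is exactly what restores local analyticity of the relevant $|\mu_j|$, and in each of the three cases the Lipschitz bound on the second derivative is inherited directly from Theorem~\ref{thm:eig_tcd} or Theorem~\ref{thm:c2_lipschitz}.
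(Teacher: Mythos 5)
Your proposal is correct, but it diverges from the paper's proof in an interesting way, chiefly in part (iii). The paper's proof is a chain of three matrix-level reductions that uses Theorem~\ref{thm:eig_tcd} as a black box: (i) apply it to $-H$; (ii) apply it to the augmented analytic Hermitian family $\diag(H,-H)$, whose largest eigenvalue is exactly $\srad(H)$; (iii) apply (ii) to $\srad \circ H^{-1}$ and take the reciprocal, the nonzero hypothesis guaranteeing that $H^{-1}$ is an analytic Hermitian family near the minimizer and that the reciprocal preserves the smoothness. Your (i) is identical. Your (ii) is equivalent in substance --- the eigenvalues of $\diag(H,-H)$ are precisely your $2n$ functions $\pm\mu_j$ --- but you re-run the Rellich-plus-Theorem~\ref{thm:c2_lipschitz} argument by hand rather than invoking Theorem~\ref{thm:eig_tcd} on an augmented family. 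Your (iii) is genuinely different: instead of inverting the matrix and taking reciprocals of function values, you use the nonzero hypothesis to fix the sign of each eigenvalue locally, so that $|\mu_j|=\sgn(\tilde\mu_j)\,\mu_j$ is real analytic, and then convert the minimum to a maximum by negation before applying Theorem~\ref{thm:c2_lipschitz}. Both routes are valid; the paper's buys brevity and uniformity (every part is a one-line reduction to an already-proved theorem), while yours is more self-contained, avoids forming $H^{-1}$ and the attendant reciprocal-smoothness step, and makes transparent exactly where the nonzero-value hypothesis enters, namely in securing local sign constancy of the eigenvalues, the same role it plays (less visibly) in legitimizing the paper's inversion.
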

\begin{proof}
Statements (i) and (ii) follow 
from applying Theorem~\ref{thm:eig_tcd}
to $-H$ and $\diag(H,-H)$, respectively.  
For (iii), apply (ii) to $\rho\circ H^{-1}$ and take the reciprocal.
\end{proof}

\begin{corollary}
\label{cor:singC2}
Let $A : \dom \to \C^{m \times n}$ be an analytic matrix family in one real variable
on an open domain $\dom \subseteq \R$, let $\smax$ denote largest singular value, and let 
$\smin$ denote smallest singular value, noting that the latter is nonzero if and only if the matrix has full rank.
Then:
\begin{enumerate}[label=(\roman*),font=\normalfont,leftmargin=0.75cm]
	\item $\smax \circ A$ is $\Ctwo$ near all of its local maximizers, and
	\item $\smin \circ A$ is $\Ctwo$ near all of its local minimizers at which the minimal value is nonzero.
\end{enumerate}
Furthermore, in each case the second derivative is Lipschitz near the relevant maximizers/minimizers.
\end{corollary}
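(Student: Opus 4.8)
The plan is to reduce each statement to the eigenvalue results already established by realizing the extreme singular values of $A$ as extreme eigenvalues of an associated analytic Hermitian family. The one preliminary point to dispatch is that $t \mapsto A(t)^*$ is itself an analytic matrix family in the real variable $t$: if an entry of $A$ has local power series $\sum_k a_k (t-\opt)^k$, then the corresponding entry of $A^*$ has the convergent series $\sum_k \overline{a_k}\,(t-\opt)^k$, since $t-\opt$ is real. Consequently both the Hermitian dilation and the Gram matrix used below are analytic Hermitian families to which Theorem~\ref{thm:eig_tcd} and Corollary~\ref{cor:eigC2} apply.

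For (i), I would form the Hermitian dilation
\[
	\widetilde A(t) = \begin{pmatrix} 0 & A(t) \\ A(t)^* & 0 \end{pmatrix} \in \H^{m+n},
\]
which is analytic and whose eigenvalues are $\pm\sigma_j(A(t))$ together with $|m-n|$ zeros. Hence $\lmax(\widetilde A(t)) = \smax(A(t))$ for all $t$, so every local maximizer of $\smax \circ A$ is a local maximizer of $\lmax \circ \widetilde A$, and Theorem~\ref{thm:eig_tcd} gives that $\smax \circ A$ is $\Ctwo$ with Lipschitz second derivative there. For (ii), assume without loss of generality that $m \geq n$ (otherwise replace $A$ by $A^*$, which has the same singular values and is analytic by the above), and set $G(t) = A(t)^* A(t) \in \H^n$. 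This $G$ is analytic, Hermitian, and positive semidefinite, with eigenvalues $\sigma_1(A)^2 \geq \cdots \geq \sigma_n(A)^2$; in particular $\lmin(G(t)) = \smin(A(t))^2$. At a local minimizer $\opt$ of $\smin \circ A$ with $\smin(A(\opt)) > 0$, continuity gives $\smin \circ A > 0$ nearby, so $\opt$ is also a local minimizer of $\lmin \circ G$ with positive value, and Corollary~\ref{cor:eigC2}(i) shows $\lmin \circ G$ is $\Ctwo$ with Lipschitz second derivative near $\opt$.

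The reductions themselves are immediate from the spectral descriptions of $\widetilde A$ and $G$; note that using the dilation for (i) rather than squaring is precisely what lets us avoid any assumption on $\smax(A(\opt))$, since Theorem~\ref{thm:eig_tcd} imposes none. The only steps requiring genuine care are the verification that conjugation preserves real-analyticity and the square-root step in (ii), namely passing from $\smin(A)^2 = \lmin \circ G$ back to $\smin \circ A = \sqrt{\lmin \circ G}$. Since $\lmin \circ G$ is continuous and positive near $\opt$, its values lie in a compact subinterval of $(0,\infty)$ on which $t \mapsto \sqrt{t}$ is analytic with bounded derivatives, so composing this $\Cinf$ map with the $\Ctwo$ function $\lmin \circ G$ yields a $\Ctwo$ function. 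I would then read off its second derivative via the chain rule as a sum of products of $\sqrt{\cdot}$-derivatives evaluated at $\lmin \circ G$ with derivatives of $\lmin \circ G$; each factor is bounded and Lipschitz near $\opt$, so the second derivative of $\smin \circ A$ is Lipschitz as well, completing (ii).
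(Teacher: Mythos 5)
Your proof is correct, but it diverges from the paper's in a way worth noting, chiefly in part (i). The paper handles both parts with the single Gram family $H(t)=A(t)^{*}A(t)$ (or $A(t)A(t)^{*}$ when $n>m$), whose eigenvalues are the squared singular values: it gets (i) by applying Corollary~\ref{cor:eigC2}(ii) (spectral radius) to $H$ and (ii) by applying Corollary~\ref{cor:eigC2}(iii) (inner spectral radius) to $H$, taking square roots in both cases. You instead prove (i) with the Jordan--Wielandt dilation
\[
	\widetilde A(t) = \begin{pmatrix} 0 & A(t) \\ A(t)^* & 0 \end{pmatrix},
\]
so that $\smax \circ A = \lmax \circ \widetilde A$ exactly and Theorem~\ref{thm:eig_tcd} applies with no square root at all. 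This buys something genuine: the paper's square-root step for (i) is slightly delicate when $\smax(A(\opt))=0$, since $\sqrt{\cdot}$ is not differentiable at $0$ (one must observe separately that $\smax\circ A$ is then identically zero near $\opt$, a point the paper passes over); your dilation needs no such caveat. The costs are a larger matrix ($(m+n)\times(m+n)$ versus $\min(m,n)\times\min(m,n)$) and the loss of the paper's uniform treatment of both parts by one family. For (ii) your route is essentially the paper's, except that you invoke Corollary~\ref{cor:eigC2}(i) ($\lmin$) rather than (iii) (inner spectral radius); these coincide for the positive semidefinite Gram matrix, so both are valid, and yours is arguably the more direct citation. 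Two further points in your write-up are improvements in rigor rather than substance: the explicit check that $t\mapsto A(t)^{*}$ is analytic in the real variable $t$ (the paper encodes the same fact via its $\Re A \mp \imagunit\,\Im A$ factorization), and the chain-rule verification that composing with $\sqrt{\cdot}$ at positive values preserves $\Ctwo$ with Lipschitz second derivative, which the paper compresses into the phrase ``taking the square root.''
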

\begin{proof}
If $m\geq n$, consider the real analytic Hermitian matrix family \mbox{$H: \dom\to\H^{n}$} defined by
\[
      H(t) = A(t)^{*} A(t) = \left (\Re A(t) - \imagunit\, \Im A(t) \right)\tp   \left (\Re A(t) + \imagunit\, \Im A(t) \right ),
\]
whose eigenvalues are the squares of the singular values of $A(t)$. Then (i) and (ii), respectively, follow from 
applying Corollary~\ref{cor:eigC2} (ii) and (iii), respectively, to $H(t)$, and then taking the square root.
If $n>m$, set $H(t)=A(t)A(t)^{*}$ instead.
\end{proof}

Corollary~\ref{cor:singC2}~(i) is the regularity result that Boyd and Balakrishnan established in~\cite{BoyB90}.
For Corollary~\ref{cor:singC2}~(ii), note that the assumption that the minimal value of $\smin \circ A$ is nonzero
is necessary; e.g., $\smin(t)$ is nonsmooth at its minimizer $t=0$.

\subsection{The $\Hinf$ norm}
This application was the original motivation for Boyd and Balakrishnan's work.
Let $A \in \C^{n \times n}$, $B \in \C^{n \times m}$, $C \in \C^{p \times n}$, and $D \in \C^{p \times m}$
and consider the linear time-invariant system with input and output:
\begin{subequations}
	\label{eq:lti}
	\begin{align}
	\dot x &= Ax + Bu, \\
	y &= Cx + Du.
	\end{align}
\end{subequations}
Assume that $A$ is asymptotically stable, i.e., its eigenvalues 
are all in the open left half-plane.
An important quantity in control systems engineering and model-order reduction is the $\Hinf$ norm of~\eqref{eq:lti},
which measures the sensitivity of the system to perturbation and can be computed
by solving the following optimization problem:
\begin{equation}
	\label{eq:hinf}
	\max_{\omega \in \R} \ \smax(G(\imagunit \omega)),
\end{equation}
where $G(\lambda) = C(\lambda I - A)^{-1}B + D$ is the transfer matrix
associated with~\eqref{eq:lti}. Even though there is only one real variable, finding the global maximum of this
function is nontrivial.

By extending Byer's breakthrough result on computing the distance to instability~\cite{Bye88},
Boyd et al.~\cite{BoyBK89} 
developed a globally convergent bisection method to solve~\eqref{eq:hinf}~to arbitrary accuracy.
Shortly thereafter, a much faster algorithm, 
 based on computing level sets of~$\smax(G(\imagunit \omega))$,
was independently proposed in~\cite{BoyB90} and \cite{BruS90},
with Boyd and Balakrishnan showing that this iteration converges quadratically~\cite[Theorem~5.1]{BoyB90}.
As part of their work, they showed that, with respect to the 
real variable~$\omega$, $\smax(G(\imagunit \omega))$ is $\Ctwo$ with Lipschitz second derivative
near any of its local maximizers~\cite[pp.~2--3]{BoyB90}.
Subsequently, this smoothness property has been leveraged to 
further accelerate computation of the $\Hinf$ norm~\cite{GenVV98,BenM18a}.

\subsection{The numerical radius}
Now consider the numerical radius of 
a matrix $A \in \C^{n \times n}$:
\begin{equation}
	r(A) = \max \{ |z | : z \in W(A)\},
\end{equation}
where $W(A) = \{ v^*Av : v \in \C^n, \|v\|_2 = 1\}$ is
the field of values (numerical range) of $A$.
Following~\cite[Ch.~1]{HorJ91}, the numerical radius can
be computed by solving either 
\begin{equation}
	r(A) = \max_{\theta \in [0,2\pi)} \lmax(H(\theta))
	\qquad \text{or}	\qquad 
	r(A) = \max_{\theta \in [0,\pi)} \srad(H(\theta)),
\end{equation}
where $H(\theta) = \tfrac{1}{2}\left( \eit A + \emit A^*\right)$.

In~\cite{MenO05}, Mengi and the second author proposed
the first globally convergent method guaranteed to compute $r(A)$ to arbitrary accuracy.
This was done by employing a level-set technique that converges to a global maximizer of~$\lmax \circ H$,
similar to the aforementioned method of~\cite{BoyB90,BruS90} for the $\Hinf$ norm,
and observing, but not proving, quadratic  convergence of the method.
Quadratic convergence was later proved by G\"urb\"uzbalaban in his PhD thesis~\cite[Lemma 3.4.2]{Gur12},
following the proof used in \cite{BoyB90}, showing that $\lmax \circ H$ is $\Ctwo$ near maximizers.

\subsection{Optimization of passive systems}
Let $\mathcal{M} = \{A,B,C,D\}$ denote the system~\eqref{eq:lti},
but now with $m=p$ and the associated transfer function $G$ being minimal and 
proper~\cite{ZhoDG96}.
Mehrmann and Van Dooren~\cite{MehV20}
 have recently shown that 
another important problem is to compute the maximal value $\Xi \in \R$ such that for all $\xi < \Xi$, 
the related system $\mathcal{M}_\xi = \{A_\xi,B,C,D_\xi\}$ is strictly passive\footnote{A 
strictly passive system is one whose stored energy is decreasing; for more a formal treatment, see \cite{MehV20}.},
where $A_\xi = A + \tfrac{\xi}{2}I_n$ and $D_\xi = D - \tfrac{\xi}{2}I_m$.
Letting $G_\xi$ be the transfer matrix associated with $\mathcal{M}_\xi$,
by~\cite[Theorem~5.1]{MehV20}, the quantity~$\Xi$ is the unique root of
\begin{equation}
	\label{eq:xi}
	\gamma(\xi) \coloneqq
	\min_{\omega \in \R} \lmin \left( G_\xi(\imagunit \omega)^* + G_\xi(\imagunit \omega) \right) = 0.
\end{equation}
Note that in contrast to the univariate optimization problems discussed previously, 
computing~$\Xi$ is a problem
in two real parameters, namely, $\xi$ and~$\omega$.  
In~\cite[section~5]{MehV20}, 
Mehrmann and Van Dooren introduced both a bisection algorithm to compute~$\Xi$,
and an apparently faster ``improved iteration" whose exact convergence properties were not established.
However, using the fact that~$\lmin$ in~\eqref{eq:xi} 
is $\Ctwo$ with Lipschitz second derivative near all its minimizers, as well as some other tools, 
the first author and Van Dooren
have since established a rate-of-convergence result for this ``improved iteration"
and also
 presented a much faster and more numerically reliable algorithm to compute~$\Xi$ with quadratic convergence~\cite{MitV21}.
 
\section{Concluding remarks}
\label{sec:conclusion}

We have shown that the maximum eigenvalue of a univariate real analytic Hermitian matrix family is unconditionally $\Ctwo$ near all its maximizers,
with Lipschitz second derivative.
Although the result is well known in the context of the maximum singular value of a transfer function, its generality and
simplicity have apparently not been fully appreciated.
We believe that this result and its corollaries may be useful in many applications, some of which were summarized in this paper.
We also investigated whether this smoothness property extends to max functions more generally, 
showing that
the pointwise maximum of a finite set of $q$-times continuously differentiable univariate functions must have zero derivative at a maximizer for $q=1$, but arbitrarily close to the maximizer, the derivative may not be defined,
even when $q=3$ and the maximizer is isolated.

All figures and the symbolically computed coefficients of $p_k$ 
given in Appendices~\ref{apdx:proofs} and~\ref{apdx:sk} 
can be generated by \matlab\ codes that are available upon request.

\appendix

\section{Proofs of Theorems~\ref{thm:f1_c3} and \ref{thm:f1_isolated}}
\label{apdx:proofs}

\begin{lemma}
\label{lem:pk_coeffs}
For $f_1$ defined in~\eqref{eq:f1}, if
 $s_k=1 + 2^{-2k}$, then the coefficients of the polynomial 
\mbox{$p_k(t) = \sum_{j=0}^9 c_j t^j$} are:
\begin{equation*}
	\label{eq:pk_coeffs}
	c_j = \begin{cases}
	z_j 2^{(j-4)k} - 1 & \text{if $j=2$} \\
	z_j 2^{(j-4)k} 	& \text{otherwise}
	\end{cases}
	\ \ \quad \text{with} \ \ \quad
	\begin{alignedat}{8}
	 	z_9 &= {}& -98304, & \quad & z_5 &= {}& -3631104, & \quad & z_1 &= {}& -61440, \\
		z_8 &= {}& 663552, && z_4 &= {}& 2585088, && z_0 &= {}& 4608. \\
		z_7 &= {}& -1966080, && z_3 &= {}& -1210368, && {}& \\
		z_6 &= {}& 3354624, && z_2 &= {}& 359424, && {}& 
	\end{alignedat}
\end{equation*}
\end{lemma}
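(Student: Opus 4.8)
The plan is to avoid solving a separate $10\times10$ linear system afresh for each $k$ by exploiting the self-similar structure of the interpolation nodes. Recall that $p_k$ is the unique polynomial of degree at most nine fixed by the ten Hermite conditions in Section~\ref{sec:counter}: it matches $-t^2$ together with its first three derivatives at the endpoints $l_{k+1}=2^{-(k+1)}$ and $l_k=2^{-k}$ (eight conditions), matches the value of $-t^2$ at the midpoint $t_k=\tfrac34\,2^{-k}$, and has first derivative there equal to $s_k$ times $(-t^2)'$. The crucial observation is that these three nodes are exactly $2^{-k}$ times the fixed nodes $\tfrac12,\tfrac34,1$, which motivates the change of variables $t=2^{-k}\tau$.

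Accordingly, I would introduce the rescaled polynomial $P(\tau)=2^{2k}p_k(2^{-k}\tau)$, again of degree at most nine, noting that applying the factor $2^{2k}$ to $-t^2=-(2^{-k}\tau)^2$ gives simply $-\tau^2$. Because each differentiation in $\tau$ of $p_k(2^{-k}\tau)$ contributes a factor $2^{-k}$, one has $P^{(m)}(\tau)=2^{(2-m)k}p_k^{(m)}(2^{-k}\tau)$, so multiplying the $m$-th derivative matching condition at an endpoint by $2^{(2-m)k}$ turns it into the identical condition for $P$ against $-\tau^2$ at the corresponding fixed node, and likewise for the midpoint value condition. All of these become independent of $k$, and the only condition retaining $k$-dependence is the midpoint first-derivative one, which reads $P'(\tfrac34)=-\tfrac32 s_k$.

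By linearity of Hermite interpolation I would then write $P(\tau)=-\tau^2+(s_k-1)R(\tau)$, where $R$ is the fixed degree-nine polynomial solving the $k$-independent homogeneous problem: vanishing value and first three derivatives at $\tfrac12$ and $1$, vanishing value at $\tfrac34$, and $R'(\tfrac34)=-\tfrac32$. Undoing the scaling through $p_k(t)=2^{-2k}P(2^kt)$ and inserting $s_k-1=2^{-2k}$, the coefficient of $t^j$ becomes $c_j=2^{(j-2)k}\bigl(-\delta_{j2}+2^{-2k}r_j\bigr)$, that is, $c_j=2^{(j-4)k}r_j$ for $j\neq2$ and $c_2=2^{-2k}r_2-1$, where the $r_j$ are the coefficients of $R$. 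This is precisely the claimed form with $z_j=r_j$, and it explains structurally both the exponents $2^{(j-4)k}$ and the lone $-1$ correction at $j=2$ coming from the $-\tau^2$ term; note also that the specific choice $s_k=1+2^{-2k}$ is exactly what supplies the extra factor $2^{-2k}$ that converts the $2^{(j-2)k}$ of the coordinate change into $2^{(j-4)k}$.

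All that remains is to compute $R$ explicitly and verify that its coefficients are the stated integers $z_j$. This is a single fixed $10\times10$ linear solve, and I expect it to be the only genuine labor in the proof: conceptually routine but demanding care in the bookkeeping, which is why I would carry it out symbolically. As consistency checks I would confirm $z_9\neq0$, so that $p_k$ has degree exactly nine, and verify directly that the computed $R$ satisfies the homogeneous conditions, for instance $R(\tfrac34)=0$ and $R^{(m)}(1)=0$ for $m=0,\dots,3$.
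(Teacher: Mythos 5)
Your proposal is correct, and it takes a genuinely different route from the paper. The paper's proof is purely computational: it solves the $k$-dependent $10\times 10$ generalized Vandermonde system symbolically in \matlab\ and cross-checks numerically, which is quick but leaves the structure of the answer (the $k$-independent integers $z_j$, the exponent $(j-4)k$, the lone $-1$ at $j=2$) unexplained. Your rescaling $t=2^{-k}\tau$, $P(\tau)=2^{2k}p_k(2^{-k}\tau)$ isolates all $k$-dependence into the single midpoint-slope condition $P'(\tfrac34)=-\tfrac32 s_k$, and the affine decomposition $P=-\tau^2+(s_k-1)R$ with a fixed homogeneous solution $R$ then forces exactly the claimed form with $z_j$ equal to the coefficients $r_j$ of $R$; this is more illuminating, and as a bonus your general formula $c_j = 2^{(j-2)k}(s_k-1)r_j-\delta_{j2}$ also proves, with no extra work, the companion formula for $s_k=1+2^{-k}$ stated in Appendix~\ref{apdx:sk} (there $s_k-1=2^{-k}$ gives exponent $(j-3)k$, same $z_j$). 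Two remarks. First, you leave the final computation of $R$ as a deferred symbolic solve, which is no worse than the paper's own proof, but you can do better: since $R$ vanishes together with its first three derivatives at $\tau=\tfrac12$ and $\tau=1$ and vanishes at $\tau=\tfrac34$, it must be $R(\tau)=C\left(\tau-\tfrac12\right)^4(\tau-1)^4\left(\tau-\tfrac34\right)$ for a constant $C$, and the condition $R'(\tfrac34)=C\cdot 4^{-8}=-\tfrac32$ gives $C=-\tfrac32\cdot 4^8=-98304$; expanding this product by hand recovers all the stated integers (e.g., $r_9=C=z_9$, $r_8=-C\cdot\tfrac{27}{4}=663552=z_8$, $r_0=-C\cdot\tfrac{3}{64}=4608=z_0$), so your proof needs no linear solve at all. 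Second, for full rigor you should note why the Hermite problem is uniquely solvable in degree $\le 9$ (the derivative orders at each node are consecutive starting from $0$, so this is standard Hermite interpolation), since both your linearity argument and the identification of $R$ rely on that uniqueness; the paper implicitly assumes it as well.
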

\begin{proof}
The coefficients were computed symbolically in \matlab\ by solving 
the linear system defined by the generalized Vandermonde matrix and right-hand side
determining each $p_k$ in~\eqref{eq:f1}.
These formulas were also verified by comparing with numerical computations.
\end{proof}

\begin{proof}[Proof of Theorem~\ref{thm:f1_c3}]
Function $f_1$ defined in~\eqref{eq:f1} is clearly $\Cthree$ near any nonzero $t$, since 
our construction ensures that the first three derivatives of $p_k$ and $p_{k+1}$ match where they meet.
We must show that it is also $\Cthree$ at $t=0$.
First note that for the coefficients given in Lemma~\ref{lem:pk_coeffs}, we can replace their 
dependency on~$k$ with a dependency on~$t$ by 
using $k=-\ceil{\log_2 t}$.
Thus, $f_1$ can be written as follows:
\begin{equation}
	\label{eq:f1_no_k}
	f_1(t) = \begin{cases}
	\sum_{j=0}^9 \tilde c_j t^j & \text{if $t > 0$} \\
	-t^2 	& \text{if $t \in [-1,0]$} \\
	\end{cases}
\end{equation}
where $\tilde c_j$ is obtained by replacing $k$ in $c_j$ with $-\ceil{\log_2 t}$.

We begin by looking at the first derivative.
For $f_1^\prime$ to exist and be continuous at $t=0$, 
\begin{equation}
	\label{eq:f1_c1}
	f_1^\prime(0) 
	= \lim_{\eps \to 0^+} \frac{f_1(0 + \eps) - f_1(0)}{\eps}
	= \lim_{\eps \to 0^+} \frac{f_1(\eps)}{\eps} = 0
\end{equation}
must hold, i.e., the derivative from the right (over the $p_k$ pieces) 
must match the derivative from the left (over the $-t^2$ piece).
To show that \eqref{eq:f1_c1} holds, we show that each term in the sum in \eqref{eq:f1_no_k}
divided by $t$ goes to zero as $t \to 0^+$, 
i.e., that $\lim_{t \to 0^+} \tilde c_j t^{j-1} = 0$ for $j \in \{0,1,\ldots,9\}$.
It is obvious that this holds for $j=4$ since $c_4= \tilde c_4 = z_4$ is a fixed number.
To show the highest-order term ($j=9$) vanishes as $t \to 0^+$, 
we can make use of the fact that $0 < 2^{-\ceil{\log_2 t}} \leq 2^{-\log_2 t} = t^{-1}$
holds for all $t > 0$, i.e.,
\[
	\lim_{t \to 0^+} \big| z_9 2^{-5\ceil{\log_2 t}} t^8 \big| \leq 
	\lim_{t \to 0^+} \big| z_9 (t^{-1})^{5} t^8 \big| = 
	\lim_{t \to 0^+} \big| z_9 t^3 \big| = 0.
\]
Similar arguments show that $\lim_{t \to 0^+} \tilde c_j t^{j-1} = 0$ holds for $j \in \{5,6,7,8\}$.
Using the fact that 
\mbox{$0 < 2^{\ceil{\log_2 t}} \leq 2^{1 + \log_2 t} = 2t$} for all $t > 0$,
for $j=3$, we have that 
\[
	\lim_{t \to 0^+} \big| z_3 2^{\ceil{\log_2 t}} t^2 \big| \leq
	\lim_{t \to 0^+} \big| z_3 2t^3 \big| = 0,
\]
while for $j=2$ and $j=1$ we respectively have that  
\[
	\lim_{t \to 0^+} \big| z_2 2^{2\ceil{\log_2 t}} - 1 \big| t \leq 
	\lim_{t \to 0^+} \big| z_2 (2t)^2 - 1 \big| t =
	\lim_{t \to 0^+} \big| z_2 (4t^3 - t) \big| = 0.
\]
and
\[
	\lim_{t \to 0^+} \big| z_1 2^{3 \ceil{\log_2 t}} \big| \leq 
	\lim_{t \to 0^+} \big| z_1 (2t)^3 \big| = 0.
\]
Finally, for $j=0$, we have that 
\[
	\lim_{t \to 0^+}  \frac{z_0 2^{4 \ceil{\log_2 t}}}{t} \leq
	\lim_{t \to 0^+} \frac{z_0 (2t)^4}{t} = 0.
\]
Hence, we have shown that $f_1$ is at least $\Cone$ on its domain.

Analogously, for $f_1^{\prime\prime}$ to exist and be continuous at $t=0$, 
\begin{equation}
	\label{eq:f1_c2}
	f_1^{\prime\prime}(0) 
	= \lim_{\eps \to 0^+} \frac{f_1^\prime(0 + \eps) - f_1^\prime(0)}{\eps}
	= \lim_{\eps \to 0^+} \frac{f_1^\prime(\eps)}{\eps} = -2
\end{equation}
must hold.  
We have that 
\begin{equation}
	\label{eq:f1p_no_k}
	f_1^\prime(t) = \begin{cases}
	\sum_{j=1}^9 j \tilde c_j t^{j-1} & \text{if $t > 0$} \\
	-2t 	& \text{if $t \in [-1,0]$} \\
	\end{cases}
\end{equation}
and so we consider $\lim_{t \to 0^+} j \tilde c_j t^{j-2} $ for $j \in \{1,\ldots,9\}$,
i.e., the limit of each term in the sum in \eqref{eq:f1p_no_k} divided by $t$.
We show that for all but $j = 2$, these values goes to zero, while the $j=2$
value goes to $-2$ as $t\to0^+$.
For $j=9$, we have that
\[
	\lim_{t \to 0^+} \big| 9 z_9 2^{-5\ceil{\log_2 t}} t^7 \big| \leq
	\lim_{t \to 0^+} \big| 9 z_9 (t^{-1})^5 t^7 \big| = 0, 
\]
with similar arguments showing that $j\in \{5,6,7,8\}$ values also diminish to zero.
For $j=4$, we simply have $\lim_{t\to0^+} 4z_4 t^2 = 0$.
For $j=3$, 
\[
	\lim_{t \to 0^+} \big| 3 z_3 2^{\ceil{\log_2 t}} t \big| \leq
	\lim_{t \to 0^+} \big| 3 z_3 (2t) t \big| 
	= 0.
\]
For $j=2$, we have that
\[
	\lim_{t \to 0^+} 2 (z_2 2^{2\ceil{\log_2 t}} - 1) =
	\lim_{t \to 0^+} 2 z_2 (2^{\ceil{\log_2 t}})^2 - 2 
	= -2.
\]
Lastly, for $j=1$, we have that 
\[
	\lim_{t \to 0^+} \frac{\big| z_1 2^{3\ceil{\log_2 t}} \big| }{t} \leq
	\lim_{t \to 0^+} \frac{\big| z_1 (2t)^3 \big|}{t} = 0,
\]
and so we have now shown that $f_2$ is at least $\Ctwo$ on its domain.

Finally, for $f_1^{\prime\prime\prime}$ to exist and be continuous at $t=0$, 
\begin{equation}
	\label{eq:f1_c3}
	f_1^{\prime\prime\prime}(0) 
	= \lim_{\eps \to 0^+} \frac{f_1^{\prime\prime}(0 + \eps) - f_1^{\prime\prime}(0)}{\eps}
	= \lim_{\eps \to 0^+} \frac{f_1^{\prime\prime}(\eps) + 2}{\eps} = 0
\end{equation}
must hold. 
We have that 
\begin{equation}
	\label{eq:f1pp_no_k}
	f_1^{\prime\prime}(t) = \begin{cases}
	\sum_{j=2}^9 j(j-1) \tilde c_j t^{j-2} & \text{if $t > 0$} \\
	-2 	& \text{if $t \in [-1,0]$} \\
	\end{cases}
\end{equation}
and so we consider $\lim_{t \to 0^+} j (j-1) \tilde c_j t^{j-3} $ for $j \in \{2,\ldots,9\}$,
i.e., the limit of each term in the sum in \eqref{eq:f1pp_no_k} divided by $t$.
For $j \in \{5,6,7,8,9\}$, we again have similar arguments showing that the corresponding values vanish, 
so we just show the $j=9$ case, which follows because
\[
	\lim_{t \to 0^+} \big| 72 z_9 2^{-5\ceil{\log_2 t}} t^6 \big| \leq 
	\lim_{t \to 0^+} \big| 72 z_9 (t^{-1})^5 t^6 \big|  = 0.
\]
Again, it is clear that the value for $j=4$ vanishes.
For $j=3$, we have that 
\[
	\lim_{t \to 0^+} \big| 6 z_3 2^{\ceil{\log_2 t}} \big| \leq 
	\lim_{t \to 0^+} \big| 6 z_3 (2t) \big|
 	= 0.
\] 
Finally, for $j=2$, we can rewrite \eqref{eq:f1_c3} as follows, making use of these aforementioned limits which vanish and replacing $\eps$ by $t$,
to obtain a limit only involving the $j=2$ term:
\begin{align*}
	f_1^{\prime\prime\prime}(0) 
	= \lim_{t \to 0^+} \frac{f_1^{\prime\prime}(t) + 2}{t} 
	&= \lim_{t \to 0^+} \frac{2 (z_2 2^{2\ceil{\log_2 t}} - 1) + 2}{t}  \\
	&= \lim_{t \to 0^+} \frac{2 z_2 (2^{\ceil{\log_2 t}})^2}{t}
	\leq \lim_{t \to 0^+} \frac{2 z_2 (2t)^2}{t} = 0.
\end{align*}
Thus, $f_1$ is indeed $\Cthree$ on its domain.
\end{proof}

\begin{proof}[Proof of Theorem~\ref{thm:f1_isolated}]
Since $l_k$ is a power of two, we can rewrite the derivative 
of $p_k$, i.e., 
 $
 	p_k^\prime(t) = \sum_{j=1}^9 j c_j t^{j-1},
 $
as a function of $\zeta \in [1,2]$:
\begin{align*}
	\tilde p_k^\prime(\zeta) 
	= \sum_{j=1}^9 j c_j (l_{k+1}\zeta)^{j-1}
	&= \sum_{j=1}^9 \frac{j c_j}{2^{(k+1)(j-1)}}\zeta^{j-1} 
	=  \frac{2 (z_2 2^{-2k} - 1)}{2^{k+1}} \zeta +  
		\sum_{\substack{j=1 \\ j\neq 2}}^9 \frac{j  z_j 2^{(j - 4)k}}{2^{(k+1)(j-1)}} \zeta^{j-1} \\
	& = \frac{ z_2  - 2^{2k}}{2^{3k}} \zeta +  
		\sum_{\substack{j=1 \\ j\neq 2}}^9 \frac{j  z_j 2^{1 - j}}{2^{3k}} \zeta^{j-1} 
	= \frac{1}{2^{3k}} \Bigg( (z_2  - 2^{2k}) \zeta + 
		\sum_{\substack{j=1 \\ j\neq 2}}^9 \tilde z_j \zeta^{j-1} \Bigg),
\end{align*}
where $\tilde z_j = j z_j 2^{1 - j}$.
From Lemma~\ref{lem:pk_coeffs}, we see that $z_2 - 2^{2k} < 0$ for all $k \geq 10$,
while for any~$k$, we have that $\tilde z_j < 0$ for $j \in \{1,3,5,7,9\}$ and 
$\tilde z_j > 0$ for $j \in \{4,6,8\}$.
Since $\zeta \in [1,2]$, an upper bound for $\tilde p_k^\prime$ can be obtained by evaluating 
its negative terms at $\zeta=1$ and its positive terms at $\zeta = 2$, i.e., for all $k \geq 10$ 
and any $\zeta \in [1,2]$, we have that
\[
	 \tilde p_k^\prime(\zeta) \leq 
	 \frac{1}{2^{3k}} \Bigg( (z_2  - 2^{2k}) + 
	 	\sum_{j\in \{ 1,3,5,7,9\} } \tilde z_j + 
		\sum_{j\in \{4,6,8\}} \tilde z_j 2^{j-1} \Bigg).
\]
For $k\geq 13$, the upper bound on the derivative is negative.
Thus, for $k\geq 13$,  $\tilde p_k^\prime(\zeta) < 0$ for any $\zeta \in [1,2]$, so
 $p_k$ must be decreasing. Consequently, the $t=0$ maximizer of $f_1$ is isolated.  
Finally, it immediately follows that the $t=0$ maximizer of $\gmax=\max(f_{1},f_{2})$ is also isolated.
\end{proof}

\section{Why $s_k=1+2^{-k}$ is insufficient to make \eqref{eq:f1} a $\Cthree$ function}
\label{apdx:sk}
For $s_k=1 + 2^{-k}$, symbolic computation shows that 
the coefficients of $p_k(t) = \sum_{j=0}^9 c_j t^j$ are:
\[	
	c_j = \begin{cases}
	z_j 2^{(j-3)k} - 1 & \text{if $j=2$} \\
	z_j 2^{(j-3)k} 	& \text{otherwise}
	\end{cases}
\]
where the integers $z_j$ remain the same as given in Lemma~\ref{lem:pk_coeffs}. 
To see if \eqref{eq:f1_c3} still holds for this new choice of $s_k$
we look at $\lim_{t \to 0^+} j (j-1) \tilde c_j t^{j-3} $ for $j \in \{2,\ldots,9\}$.
However, now none of the individual limits vanish.
For example, for $j=9$, we have that 
\[
	\lim_{t \to 0^+} \big| 72 z_9 2^{-6\ceil{\log_2 t}} t^6 \big| \geq
	\lim_{t \to 0^+} \big| 72 z_9 (2^{-1}t^{-1})^{6} t^6 \big| =  \tfrac{9}{8} \big|z_9 \big| \neq 0,
\]
where we have used the fact that $0 <  2^{-1}t^{-1} = 2^{-1 - \log_2 t} \leq 2^{-\ceil{\log_2 t}}$;
similarly, the limits for $j\in\{4,5,6,7,8\}$ do not vanish either.
For $j=3$, we simply have that 
$\lim_{t \to 0^+} 6 z_3 = 6z_3 \neq 0$.
Finally, even if all of the terms considered above were to vanish and we substitute in the value for $j=2$ into \eqref{eq:f1_c3},
we nevertheless would end up attaining another limit that does not vanish:
\[
	\lim_{t \to 0^+} \frac{2 z_2 2^{\ceil{\log_2 t}}}{t} 
	\geq \lim_{t \to 0^+} \frac{2 z_2 (t)}{t} = 2 z_2 \neq 0.
\]	
The only remaining way that \eqref{eq:f1_c3} could hold is if all of these non-vanishing terms
cancel, but from our experiments (see Figure~\ref{fig:fppp_notC3}), we know this is not the case.

\footnotesize
\bibliographystyle{alpha} 
\bibliography{./shared/csc}
\end{document}